\newtheorem{theorem}{Theorem}
\newtheorem{remark}{Remark}
\newtheorem{proposition}{Proposition}
\newtheorem{lemma}{Lemma}
\newtheorem{corollary}{Corollary}
\newtheorem{definition}{Definition}
\def\ie{{\em i.e.,} }
\def\eg{{\em e.g.} }
\newfont\bbf{msbm10 at 12pt}
\def\eps{\varepsilon}
\def\phi{\varphi}
\def\R{{\mathbb R}}
\def\N{{\mathbb N}}
\def\Q{{\mathbb Q}}
\def\Z{{\mathbb Z}}
\renewcommand{\S}{{\mathbb S}}
\def\orb{\mbox{\rm orb}}
\def\theta{\vartheta}
\def\eps{\varepsilon}
\def\itin{\boldsymbol{i}}
\begin{document}

\title{Topological properties of Lorenz maps derived from unimodal maps}

\author{Ana Anu\v{s}i\'c, Henk Bruin, Jernej \v{C}in\v{c}}

\address[A.\ Anu\v{s}i\'c]{Departamento de Matem\'atica Aplicada, IME-USP, Rua de Mat\~ao 1010, 
Cidade Universit\'aria, 05508-090 S\~ao Paulo SP, Brazil}
\email{anaanusic@ime.usp.br}
\address[H.\ Bruin]{Faculty of Mathematics,
University of Vienna,
Oskar-Morgenstern-Platz 1, A-1090 Vienna, Austria}
\email{henk.bruin@univie.ac.at}
\address[J.\ \v{C}in\v{c}]{AGH University of Science and Technology, Faculty of Applied Mathematics, 
al.\ Mickiewicza 30, 30-059 Krak\'ow, Poland. -- and -- National Supercomputing Centre IT4Innovations, 
Division of the University of Ostrava, Institute for Research and Applications of Fuzzy Modeling, 
30. dubna 22, 70103 Ostrava, Czech Republic}
\email{jernej@agh.edu.pl}
\thanks{AA was supported by grant 2018/17585-5, S\~ao Paulo Research Foundation (FAPESP).
HB gratefully acknowledges the support of the FWF stand-alone grant number P31950-N45.
J\v{C} was supported by the FWF Schr\"odinger Fellowship stand-alone project J-4276 and 
University of Ostrava grant IRP201824 “Complex topological structures”.}

\thanks{}
\date{\today}

\subjclass[2010]{Primary: 37E05, Secondary: 37E10, 37E15, 37E30, 37E45}
\keywords{Unimodal map, Lorenz map, interval map, 
Sharkovsky's theorem, inverse limit space, planar embeddings}

\begin{abstract} A symmetric Lorenz map is obtain by ``flipping'' one of the two branches of a symmetric unimodal map.
We use this to derive a Sharkovsky-like theorem for symmetric Lorenz maps, and also to find
cases where the unimodal map restricted to the critical omega-limit set is conjugate to a Sturmian shift.
This has connections with properties of unimodal inverse limit spaces embedded as attractors of some planar homeomorphisms.
\end{abstract}

\maketitle 

\section{Introduction}\label{sec:intro}
Topological properties of a continuous dynamical system are, in general, easier to understand than those of discontinuous systems.
 For example, for continuous functions of the real line there is the celebrated Sharkovsky Theorem \cite{Shark64},
 which says that if the map has a periodic point of prime period $n$, it also has a periodic point of prime period $m$ for every $m \prec n$
in the Sharkovsky order
$$
1 \prec 2 \prec 4 \prec 8 \prec \dots \prec 4 \cdot 7 \prec 4 \cdot  5  \prec 4 \cdot 3  
\dots \prec 2 \cdot 7 \prec 2 \cdot  5  \prec 2 \cdot 3 \dots \prec 7 \prec 5 \prec 3.
$$
However, in general there is no analogue of the Sharkovsky theorem for discontinuous functions of the reals. 

In this paper we study Lorenz maps, which are piecewise monotone interval maps with a single discontinuity point.
Such Lorenz maps appear as Poincar\'e maps of geometric models of Lorenz attractors described independently by Guckenheimer \cite{Gu}, 
Williams \cite{Wil4} and Afraimovich, Bykov and Shil'nikov \cite{ABS}.
For the class of  ``old'' maps (discontinuous degree one interval maps) which also include Lorenz maps, a 
characterisation of periodic orbit forcing was given by Alsed\'a, Llibre, Misurewicz and Tresser in \cite{ALMT}. 
Hofbauer in \cite{Hof86} obtained a result similar as in \cite{ALMT} using an oriented graph with infinitely many
vertices whose closed paths represent the periodic orbits of the map except that he did not characterize completely 
the set of periodic points. In \cite{ACS} the connection between $\beta$-expansions and Sharkovsky's ordering is given. Recently, Cosper \cite{Cosper} proved a direct analogue of Sharkovsky's theorem in special 
families of piecewise monotone maps (truncated tent map family).

In the present paper we combine some old and more recent results on the relation between unimodal and Lorenz maps,
including a version of Sharkovsky's Theorem. 
The basic idea is to explore the relation between a unimodal map $f$ and symmetric Lorenz maps $\varphi$ and $\psi$ obtained by 
``flipping the right branch'' and ``flipping the left branch'' of the graph of $f$ respectively, see Figure~\ref{fig:Lorenz}. 

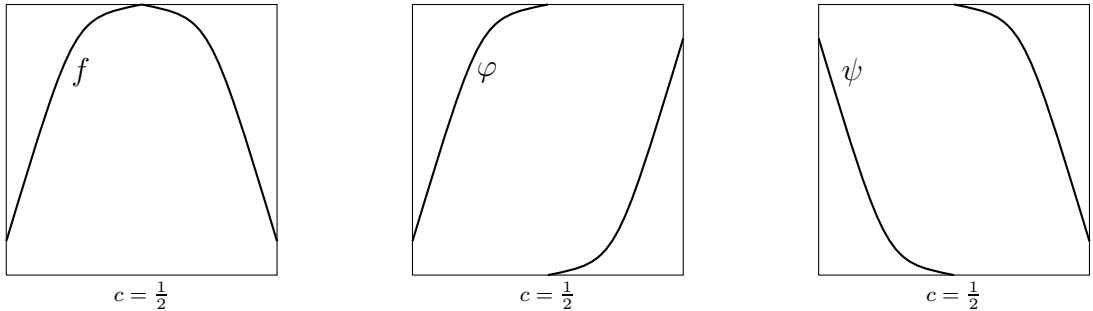
\begin{figure}[ht]
	\begin{center}
		\begin{tikzpicture}[scale=0.9]
		\draw[-, draw=black] (0,0) -- (4,0) -- (4,4) -- (0,4) -- (0,0); 
		\draw[-, draw=black] (6,0) -- (10,0) -- (10,4) -- (6,4) -- (6,0);
		\draw[-, draw=black] (12,0) -- (16,0) -- (16,4) -- (12,4) -- (12,0);
		\draw[-, thick] (0,0.5) .. controls (1, 3.8) .. (2,4); 
		\draw[-, thick] (4,0.5) .. controls (3, 3.8) .. (2,4);
		\draw[-, thick] (6,0.5) .. controls (7, 3.8) .. (8,4); 
		\draw[-, thick] (10,3.5) .. controls (9, 0.2) .. (8,0); 
		\draw[-, thick] (12,3.5) .. controls (13, 0.2) .. (14,0); \draw[-, thick] (16,0.5) .. controls (15, 3.8) .. (14,4); 
		\node at (2,-0.3) {\tiny $c=\frac12$}; \node at (8,-0.3) {\tiny $c=\frac12$};
		\node at (14,-0.3) {\tiny $c=\frac12$};  
		\node at (7.1, 3){$\varphi$}; \node at (1.1, 3) {$f$};
		\node at (12.5, 3) {$\psi$};
		\end{tikzpicture}
		\caption{\label{fig:Lorenz} An increasing and decreasing symmetric Lorenz map $\varphi$ and $\psi$ obtained from a unimodal map $f$.}
	\end{center}
\end{figure} 

For increasing Lorenz maps $\varphi$ we prove in Theorem~\ref{thm:shark} that Sharkovsky's Theorem holds with the exception 
of the fixed points and for decreasing Lorenz maps $\psi$ we prove in Theorem~\ref{thm:shark2} that Sharkovsky's Theorem 
holds possibly except for periods $2^r$, $r \geq 1$.

We can turn $\varphi$ into a proper circle endomorphism (with unique rotation number independent of $x \in \S^1$)
by setting (see also Figure~\ref{fig:barLorenz}):
$$
\bar\varphi(x) = \begin{cases}
\varphi(1)=\widetilde{f(1)}, & x \in [0,a];
\text{ where } a < c \text{ is such that } \varphi(a)=\varphi(1),\\
\varphi(x), & \text{otherwise.}
\end{cases}             
$$
In Proposition~\ref{prop:rotnumber} we calculate the rotation number of the family of such maps, and prove that in the irrational rotation number case the restriction to omega limit set is a minimal homeomorphism. We use techniques developed primarily for unimodal interval maps.

Next, we also give an implementation of Sturmian shifts in interval maps. For every Sturmian shift we assign a unimodal map (basically a kneading sequence) so that the unimodal map restricted to its omega limit set is conjugate to that Sturmian shift.

Maps $\bar\varphi$, besides being interesting on their own, prove also to be very useful in surface dynamics. Namely, knowledge of their dynamics can be related to special orientation preserving planar embeddings of inverse limit spaces with bonding maps being $f$.  
In the last section of the paper we connect the map $\bar{\varphi}$ to the study of unimodal inverse limit spaces represented as attractors of some planar homeomorphisms (this was initially done in \cite{BdCH} using a map conjugated to $\bar{\varphi}$). In Theorem~\ref{thm:accessible} 
we give a compete characterisation of accessible points of tent inverse limit spaces embedded in such a way. Then Corollary~\ref{cor:fp} gives a partial answer to Problem 1 in \cite{AABC} by giving an example of tent inverse limit space which has uncountably many inhomogeneities with only countably infinitely many of them not being endpoints. 

\section{Preliminaries}\label{sec:prelim}

Let $I := [0,1]$ be the unit interval, and $f:I \to I$ a symmetric unimodal map, i.e.,
given the involution $\tilde x = 1-x$, we assume that
$f(\tilde x) = f(x)$ for every $x$.
This means that the critical point $c = \frac12$, and by an appropriate scaling, we can assume that $f(c) = 1$.
For example, $f_a(x) = 1-a(x-\frac12)^2$ with $a \in (0,4]$ is the logistic family in this scaling.

We can turn $f$ into an (increasing) symmetric Lorenz map  $\varphi:I \to I$ by flipping the right half of the graph vertically 
around $c = \frac12$, see Figure~\ref{fig:Lorenz}, giving the following result:
$$
\varphi(x) = \begin{cases}
              f(x) & \text{ if } x \in [0,c],\\
              \widetilde{f(x)} & \text{ if } x \in (c,1].
             \end{cases}
$$
The choice $\varphi(c) = f(c) = 1$ is arbitrary, only made to be definite.

Then, $\varphi$ is semi-conjugate to $f$: $f\circ \varphi = f \circ f$. 
In fact
\begin{equation}\label{eq:ori}
\varphi^n(x) = \begin{cases}
                 f^n(x) & \text{ if } f^n \text{ is increasing at } x;\\
                 \widetilde{f^n(x)} & \text{ if } f^n \text{ is decreasing at } x.
               \end{cases}
\end{equation}
We can also flip the left branch of $f$ and obtain $\psi := \tilde \varphi$
which is called a {\em decreasing symmetric Lorenz map}.
Then $\widetilde {\psi(x)} = \psi(\tilde x)$ for all $x$, and by induction
$$
\psi^n(x) = \begin{cases}
             \tilde \varphi^n(x) = \widetilde{\varphi^n(x)} & \text{ if $n$ is odd,}\\
             \varphi^n(x)  & \text{ if $n$ is even.}\\
            \end{cases}
$$
Suppose then $\psi^n$ is continuous at $x$.
Then \eqref{eq:ori} implies that 
$$
\psi^n(x) = f^n(x) \text{ if and only if }
\begin{cases} 
f^n \text{ is decreasing at $x$} & \text{ and $n$ is odd,}\\
f^n \text{ is increasing at $x$} & \text{ and $n$ is even,}
\end{cases}
$$
and $\psi^n(x) = \widetilde{f^n(x)}$ otherwise.

\section{Sharkovsky's Theorem for Lorenz maps}\label{sec:Shark}

We can describe the dynamics of $f$ using the standard symbolic dynamics with the alphabet $\{ 0, *, 1\}$, where
the symbols stand for the sets $[0,c)$, $\{ c \}$ and $(c,1]$ respectively. It is also enough to restrict the study 
to the dynamical core $[f(0),1]$, since points from $[0,f(0))$ will be mapped to the core under $f$. 
The kneading invariant $\nu\in\{0,*,1\}^{\N}$ is the itinerary of the point $1=f(c)$. 
Since the itinerary map $x \mapsto \itin(x)$ is monotone in the parity-lexicographical order on $\{ 0, *, 1\}^\N$,
the kneading invariant is maximal admissible sequence, i.e.,
$\itin(x) \leq_{pl} \nu$ for all $x \in I$. Also, $\itin(x)\geq_{pl}\sigma(\nu)$ for all $x\in[f(0),1]$. 
It can be shown that every itinerary for which every shift is in parity-lexicographical ordering between $\sigma(\nu)$ 
and $\nu$ can be realized by a point in the dynamical core (see \eg \cite{MT}).

Also, if an $m$-periodic point $y$ is closest to $c$ from all the points in its orbit, 
and $\itin(f(y)) = \overline{e_1 \dots e_m}$, then $\sigma^n(\overline{e_1 \dots e_m}) \leq_{pl} \overline{e_1 \dots e_m}$ for all $n\geq 1$.
As a corollary, $\overline{e_1 \dots e_{m-1}e'_m}$ (if admissible) is periodic of period $k = m$ or $k = m/2$, 
which we prove in the rest of this paragraph. To prove that, assume that there is $k\geq 3$ such that for $j=m/k$ 
we can write $e_1 \dots e_{m-1}e'_m=(e_1\dots e_j)^k$. Then,since $\overline{e_1 \dots e_m}$ is maximal among its shifts, 
$e_1\dots e'_j <_{pl} e_1\dots e_j$, and thus $\#_1(e_1\dots e_j)$ is odd. But then $e_1\dots e_je_1\dots e'_j>_{pl} (e_1\dots e_j)^2$, 
so $\sigma^{(k-2)j}(\overline{e_1 \dots e_{m-1}e_m}) >_{pl} \overline{e_1 \dots e_{m-1}e_m}$,
violating the parity-lexicographical shift-maximality of $\overline{e_1 \dots e_m}$.

\begin{lemma}\label{lem:forcing}
 Let $f$ be a unimodal map with a periodic point $x$ of period $n$. Then for every $m \prec n$, $m>1$,
 there are periodic points $y$ and $y'$ of $f$ such that
 \begin{itemize}
  \item[(a)] $y$ has prime period $m$ and $f^m$ is decreasing at $y$, and
  \item[(b)] $y'$ has prime period $m$ and $f^m$ is increasing at $y'$ or
  $y'$ has prime period $m/2$ and $f^{m/2}$ is decreasing at $y'$.
 \end{itemize}
If $f^n$ is decreasing at $x$, then the statement holds for $m=n$ as well.
\end{lemma}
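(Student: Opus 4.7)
My plan is to translate the statement into a combinatorial question about admissible periodic itineraries of $f$ over $\{0,1\}$, prove (a) via explicit shift-maximal sequences of the required parity, and derive (b) from (a) by the ``flip-the-last-symbol'' trick set up in the paragraph preceding the lemma. The key dictionary is that for a periodic point $z$ of period $k$ whose orbit avoids $c$, the chain rule together with $f'>0$ on $[0,c)$ and $f'<0$ on $(c,1]$ gives $\mathrm{sign}\bigl((f^k)'(z)\bigr)=(-1)^{\#_1(d_1\dots d_k)}$, so ``$f^k$ is decreasing at $z$'' is equivalent to ``$\#_1$ over one period of $\itin(z)$ is odd''; the lemma then becomes a statement about the parity of admissible periodic sequences.

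For (a), I would exhibit, for each $m\prec n$ with $m>1$, a shift-maximal admissible periodic $\{0,1\}$-sequence of period $m$ with odd $\#_1$-count. For the $2^j$-tail, the doubling-cascade sequences $\overline{10},\overline{1011},\overline{10111010},\dots$, generated iteratively via $s\mapsto s\cdot\bar s$ (where $\bar s$ is $s$ with its last symbol complemented), have odd $\#_1$-count by induction on $j$. For $m=2^j(2i+1)$ with $i\ge 1$, starting from $\overline{1\,0^{2i}}$ (shift-maximal, $\#_1=1$) and iterating the same doubling operator produces a shift-maximal sequence of period $m$ with odd $\#_1$-count. Admissibility of each against the kneading invariant $\nu$ reduces to a parity-lex inequality that follows from the period-$n$ itinerary of $x$ sitting below $\nu$.

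For (b), take the witness $y$ from (a), replace it by the orbit representative nearest $c$, and let $\overline{e_1\dots e_m}$ be its parity-lex maximal itinerary (so $\#_1(e_1\dots e_m)$ is odd). Flip the last symbol to form $\overline{e_1\dots e_{m-1}e'_m}$: by the paragraph preceding the lemma its prime period is $m$ or $m/2$, and it is admissible since it lies strictly below $\overline{e_1\dots e_m}$ in parity-lex. If the period is $m$, flipping a single symbol toggles the $\#_1$-parity, so the new count is even and the corresponding point $y'$ has $f^m$ increasing. If the period is $m/2$, then $e_{m/2+i}=e_i$ for $1\le i<m/2$ and $e_m\neq e_{m/2}$; writing $p=\#_1(e_1\dots e_{m/2-1})$ and comparing $\sigma^{m/2}(\overline{e_1\dots e_m})\le_{pl}\overline{e_1\dots e_m}$ on the common prefix of length $m/2-1$, shift-maximality forces $(e_{m/2},e_m)=(1,0)$ when $p$ is even and $(e_{m/2},e_m)=(0,1)$ when $p$ is odd; either way $\#_1(e_1\dots e_{m/2})$ is odd, so $f^{m/2}$ is decreasing at $y'$. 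Finally, if $f^n$ is decreasing at $x$, then $x$ itself provides a witness for (a) with $m=n$, and the flipping argument above applied to $x$ gives (b) for $m=n$.

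The hard part will be (a): classical Sharkovsky for continuous interval maps only guarantees \emph{some} period-$m$ point, whereas the lemma requires one of prescribed orientation. Establishing the existence of a shift-maximal admissible itinerary with odd $\#_1$-count thus requires an explicit combinatorial construction that runs through the Sharkovsky order. Everything else---the parity-to-orientation dictionary, the flipping argument, the admissibility check in (b), and the closing clause---is essentially symbolic bookkeeping.
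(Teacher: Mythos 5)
Your overall framework --- the dictionary $\mathrm{sign}((f^k)'(z))=(-1)^{\#_1}$ between orientation and the number of $1$s over a period, and the last-symbol flip to pass from (a) to (b) --- is exactly the paper's, and your part (b), including the parity computation in the period-$m/2$ case and the handling of the closing clause, is essentially the paper's argument. The genuine gap is in part (a), and it is precisely at the point you flag as ``the hard part''. The sequences $\overline{1\,0^{2i}}$ are the \emph{last} period-$(2i+1)$ patterns to become admissible as $\nu$ increases in the parity-lexicographical order, not the first, so their admissibility does \emph{not} follow from the existence of a period-$n$ orbit with $m\prec n$. Concretely, take $m=5$, $n=3$ and a unimodal map whose kneading sequence $\nu$ begins with $101$ and satisfies $\nu\geq_{pl}(101)^\infty$ (such maps occupy the first part of the period-$3$ window, so they do have a period-$3$ point and hence points of all periods). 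Comparing $\nu=101\dots$ with $(10000)^\infty$ at position $3$, the common prefix $10$ has an odd number of $1$s, so the order reverses and $\nu<_{pl}(10000)^\infty$; hence $\overline{10000}$ is not admissible and is realized by no point of $f$. The odd-parity period-$5$ orbit that actually exists there has a different (\v{S}tefan-type) itinerary such as $\overline{10110}$. Repairing your construction would require identifying the minimal admissible odd-parity pattern of each period in the forcing order, which amounts to reproving Sharkovsky's theorem combinatorially.

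The paper sidesteps this entirely. It applies the classical Sharkovsky theorem to the \emph{continuous} map $f$ to obtain some period-$m$ orbit, takes the orbit point $y$ closest to $c$, so that $e=\itin(f(y))$ is the maximal admissible $m$-periodic itinerary, and then shows that $\#_1(e_1\dots e_m)$ must be odd: if it were even, the flipped sequence $e'=\overline{e_1\dots e_{m-1}e_m'}$ would be admissible, of prime period $m$ (the period-$m/2$ degeneration forces odd parity automatically), and strictly \emph{above} $e$ in the parity-lexicographical order, contradicting the maximality of $e$. That maximality argument is the idea missing from your proposal; once you have it, parts (a) and (b) both fall out of the same single flip, and no explicit universal itineraries are needed.
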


\begin{proof}
 By Sharkovsky's Theorem, $f$ has at least one periodic orbit of period $m$.
 Take the $m$-periodic point $y$ closest to $c$, so the itinerary $e := \itin(f(y))$
 is maximal (w.r.t.\ the party-lexicographical order $\leq_{pl}$) among all admissible $m$-periodic itineraries.
 Find $e'$ by setting $e'_i = 1-e_i$ if $e_i \neq *$ and $i = km$, $k \in \N$.
 Otherwise we set $e'_i = e_i$.
 Let us first show that $e'$ is admissible.
 Let $j \geq 1$ be the smallest integer such that $e_j \neq \nu_j$.
 If  $j  < m$,  then both $e, e' <_{pl}\nu$. 
 If $m \leq j$ and $\#\{ 1 \leq i \leq m : e_i = 1\}$ is odd, then $e' <_{pl} e <_{pl} \nu$.
 The remaining case is $\#\{ 1 \leq i \leq m : e_i = 1\}$ is even and $m\leq j$.\\
 Assume that $m=j$. Thus $e=\overline{\nu_1\ldots\nu'_m}$. To show that $e'=\overline{\nu_1\ldots\nu_m}$ is admissible, 
 assume that $e'>_{pl}\nu$. Since $\#\{1\leq i\leq m: \nu_i=1\}$ is odd, we have $\nu<_{pl}\sigma^{m}(e')=e'<_{pl}\sigma^m(\nu)$, 
 which contradicts shift-maximality of $\nu$. Thus, $e'$ is admissible in this case. Also, $e'$ cannot be periodic of period $m/2$ 
 since $e_1\ldots e_{m-1}e'_m$ has an odd number of ones. It follows that $e<_{pl}e'$, which contradicts the assumption that $e$ 
 is the closest to $\nu$ among $m$-periodic itineraries, so this case is not possible.\\
 Assume that $m<j$. Then $\sigma^m(e) \leq_{pl} \sigma^m(\nu)$ but since the first symbol at which $\sigma^m(e) = e$ and $\sigma^m(\nu)$
differ is $j-m$, the parity argument and $m$-periodicity of $e$ imply that $\sigma^m(\nu) >_{pl} \nu$, which contradicts the shift-maximality of $\nu$.
So this case cannot occur either.
 
 We conclude that $e'<_{pl}\nu$, and since it is shift-maximal, $\sigma^n(e')<_{pl}\nu$ for every $n\geq 0$. 
 We still have to argue that $\sigma^n(e')>_{pl}\sigma(\nu)$ for every $n\geq 0$. Assume there is $n\geq 0$ such 
 that $\sigma^n(e')<_{pl}\sigma(\nu)$ and take the smallest such $n$. Since $m>1$, $e'$ starts with $1$, and thus $n>0$. 
 Also, since $n$ is the smallest such integer, $\sigma^{n-1}(e')=1\sigma^n(e')$. Then $\sigma^n(e')<_{pl}\sigma(\nu)$ 
 implies that $\sigma^{n-1}(e')=1\sigma^n(e')>_{pl}\nu$, which is a contradiction. We conclude that $e'$ is admissible, 
 \ie realized by a point $y'$ in $[f(0),1]$.
 
 Moreover, we also conclude that $f^m$ is decreasing in $y$ and increasing in $y'$. From the discussion preceding the statement of the lemma, 
 we conclude that the prime period of $y'$ is $m$ or $m/2$.
 
 If $y'$ has prime period $m/2=k$, then $e=\overline{e_1\ldots e_ke_1\ldots e_{k-1}e'_k}$ and $e'=\overline{e_1 \dots e_k}$. Since $\sigma^k(e)<_{pl} e$, 
 we conclude that $\#\{1\leq i\leq k: e_i=1\}$ is odd, from which is follows that $f^k$ is decreasing in $y'$.
\end{proof}

For discontinuous interval maps, there are previous results regarding the forcing relation between 
periods, see e.g.\ \cite{ALMT} which however do not give the following result.

\begin{theorem}\label{thm:shark}
Symmetric increasing Lorenz maps $\varphi$ satisfy Sharkovsky's Theorem, except for the fixed points.
\end{theorem}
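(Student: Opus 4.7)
The strategy is to reduce to Sharkovsky's theorem for the unimodal map $f$ by translating each $\varphi$-periodic orbit of prime period $n$ into an $f$-periodic orbit of prime period $n$ or $n/2$ with controlled monotonicity, and then applying Lemma~\ref{lem:forcing} to lift back.

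Given a $\varphi$-periodic point $x$ of prime period $n > 1$, I would first iterate the semiconjugacy $f \circ \varphi = f \circ f$ to obtain $f \circ \varphi^j = f^{j+1}$, so that $f(x) = f^{n+1}(x)$ and hence $f(x)$ has some $f$-prime period $k$ dividing $n$. The $\varphi$-orbit of $x$ has $n$ distinct points, each of which maps via $f$ into the $k$-point $f$-orbit of $f(x)$; since $f$ is at most $2$-to-$1$ by unimodality, $n \le 2k$ and therefore $k \in \{n, n/2\}$. The monotonicity is then pinned down using \eqref{eq:ori}. When $k = n$, either $f^n(x) = x$ (so $x$ is $f$-periodic of prime period $n$ with $f^n$ increasing), or $f^n(x) = \tilde x$ with $f^n$ decreasing at $x$; in the latter case, the symmetry $f(\tilde z) = f(z)$ implies that $\tilde x$ is $f$-periodic of prime period $n$ with $f^n$ increasing at $\tilde x$. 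When $k = n/2$, the relation $f^{n/2+1}(x) = f(x)$ forces $f^{n/2}(x) \in \{x, \tilde x\}$; combining this with primality of the $\varphi$-period $n$ and \eqref{eq:ori} rules out all subcases except those producing an $f$-periodic orbit of prime period $n/2$ with $f^{n/2}$ decreasing, located at $x$ if $f^{n/2}(x) = x$ and at $\tilde x$ if $f^{n/2}(x) = \tilde x$.

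Once such an $f$-orbit is found, I would apply Lemma~\ref{lem:forcing}. By the discussion preceding that lemma, an $f$-periodic point $y$ of prime period $m$ has $\varphi$-prime period $m$ when $f^m$ is increasing at $y$ and $\varphi$-prime period $2m$ when $f^m$ is decreasing. When $k = n$, Lemma~\ref{lem:forcing} produces, for every $m \prec n$ with $m > 1$, an $f$-period-$m$ point $y$ with $f^m$ decreasing (lifting to $\varphi$-period $2m$) and a point $y'$ lifting to $\varphi$-period $m$, so $\varphi$ has a periodic orbit of prime period $m$ for every such $m$. When $k = n/2$, the addendum of Lemma~\ref{lem:forcing} (applicable because $f^{n/2}$ is decreasing) extends the conclusion to $m = n/2$, yielding $\varphi$-periods $m$ and $2m$ for every $m$ with $m \prec n/2$ or $m = n/2$ and $m > 1$; a direct check in the Sharkovsky order, separately for $n = 2^a$ and $n = 2^a q$ with $q \ge 3$ odd, shows that this union covers $\{m' : m' \prec n, m' > 1\}$.

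The main obstacle is the monotonicity extraction in the subcase $f^n(x) = \tilde x$: here $x$ itself is not $f$-periodic and one has to transfer to the mirror point $\tilde x$ using the symmetry $f(\tilde z) = f(z)$; an analogous twist is needed when $k = n/2$.
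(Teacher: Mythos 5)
Your proposal is correct and follows essentially the same route as the paper: project a $\varphi$-periodic orbit down to an $f$-periodic orbit of period $n$ or $n/2$ with controlled orientation using \eqref{eq:ori} and the symmetry $f(\tilde z)=f(z)$, invoke Sharkovsky's theorem for $f$ together with Lemma~\ref{lem:forcing}, and lift the resulting orbits back to $\varphi$. The only difference is cosmetic and sits in the first reduction, where you use the semiconjugacy and a $2$-to-$1$ counting argument in place of the paper's direct orientation argument (and where the paper disposes of the case that the $f$-period is not a power of two by an extra appeal to Sharkovsky, while you instead carry the ``$f^{n/2}$ decreasing'' information into the addendum of Lemma~\ref{lem:forcing}); both variants close in the same way.
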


\begin{proof} We start the proof for the symmetric Lorenz map $\varphi$ with two claims.
\begin{enumerate}
 \item 
We first show that if $\varphi$ has a periodic point of prime period $n \geq  1$, 
then $f$ also has a periodic point of prime period $n$, unless, possibly,
$n$ is  a power of $2$, and then $f$ has a periodic point of prime period $n$ or $\frac{n}{2}$.

Let $\varphi^n(x)=x$ and 
assume $\varphi^k(x)\neq x$ for all $k<n$. Then the same holds for $\tilde x$.
At exactly one of $x$ and $\tilde x$, say at $x$,
$f^n$ is increasing, so $f^n(x)=x$. 
Assume $k<n$ is such that $f^k(x)=x$ and take the smallest such (so that $x$ has prime period $k$).
If $k$ is not a power of two, then, since $k$ divides $n$, Sharkovsky's Theorem gives the existence of
a periodic point of prime period $n$ as well. So we only have to consider the case
that $k=2^r$.

If $f^k$ is increasing at $x$, then $\varphi^k(x)=f^k(x)=x$, 
a contradiction. Thus $f^k$ must be decreasing at $x$. In that case $f^{2k}$ is increasing at $x$, 
and thus $\varphi^{2k}(x)=f^{2k}(x)=x$, from which we conclude that $n=2k=2^{r+1}$ and $x$ is a periodic point of $f$ of prime period $k=\frac n2$. 
\item
Next we show that if $m>1$ is such that $f$ has an $m$-periodic point, then there exists an $m$-periodic point of $\varphi$. 
Assume $f^m(x)=x$ and $f^k(x)\neq x$ for all $k<m$. 
If $f^m$ is increasing at $x$, then $\varphi^m(x)=x$. 
Assume that there is $k<m$ such that $\varphi^k(x)=x$. Then $f^k$ must be decreasing at $x$, 
and we get $f^{2k}(x)=\varphi^{2k}(x)=x$, thus $m=2k$. 
Now $f^k(\tilde x)=f^k(x)=\widetilde{\varphi^k(x)}=\tilde x$, so $f^{2k}(x)=f^k(\tilde x)=\tilde x$, 
but on the other hand $f^{2k}(x)=f^m(x)=x$, which gives a contradiction. 

The remaining case is when $f^m$ is decreasing at $x$. 
By Lemma~\ref{lem:forcing} and its proof, we find a point $x'$ such that $f^m(x')=x'$ and $f^m$ is increasing at $x'$.
If $m$ is indeed the prime period of $x'$, then we can use the above argument to conclude that $x'$ 
is $m$-periodic point of $\varphi$.
Otherwise, the prime period of $x'$ is $m/2$ and $f^{m/2}$ is decreasing in $x'$. 
But then $\varphi^{m/2}(x') = \widetilde{x}'$ and $\varphi^m(x') = x'$, so $x'$ is periodic for $\varphi$ with prime period $m$.

However, if $m=1$, then $e = \overline 1$, $e' =\overline{0}$
and $x'$ lies in general outside the core
(and in fact outside $I$), so it is lost in the construction of $\varphi$.
Indeed, $\varphi$ has a fixed point only if it comes from a ``full'' unimodal map $f$ (\ie a unimodal map 
that exhibits all possible itineraries of points, such as \eg the quadratic Chebyshev polynomial $f(x) = 4x(1-x)$).
\end{enumerate}
To finish the proof, assume that $\varphi$ has an $n$-periodic point. 
By the first part of the proof, there exists an $n$-periodic point for $f$
(or possibly an $n/2$-periodic point if $n$ is a power of $2$).
Sharkovsky's Theorem implies that $f$ has an $m$-periodic point for every $m \prec n$. 
The second part of the proof implies that there exists an $m$-periodic point of $\varphi$ 
provided $m \neq 1$.
\end{proof}

There are maps $f$ with periodic points of period $2^r$ and no other periods.
If $r$ is maximal with this property, we say that $f$ is of {\em type} $2^r$. If $f$ has periodic points of all periods 
of the form $2^r$ we say that $f$ is of {\em type} $2^\infty$. The union of these two is
called {\em type} $\preceq 2^\infty$.
If $x$ is a $2^r$-periodic point of a unimodal map $f$ of type $\preceq 2^\infty$,
then we say that $x$ has the {\em pattern from the first period doubling cascade}; itinerary of such point is the (shift of the) 
$2^r$-periodic continuation of the Feigenbaum itinerary $\nu^F = \nu_1^F \nu_2^F \nu_3^F \dots$ which equals 
\begin{equation}\label{eq:feig}
1.0.11.1010. 10111011. 1011101010111010.1011101010111011101110101
\dots
\end{equation}
where the dots indicate the powers of $2$.

Studying the decreasing symmetric Lorenz maps $\psi$ we can obtain a theorem similar to Theorem~\ref{thm:shark}. 

\begin{theorem}\label{thm:shark2}
Decreasing symmetric Lorenz maps $\psi$ satisfy Sharkovsky's Theorem,
possibly except for periods $2^r$, $r \geq 1$.
\end{theorem}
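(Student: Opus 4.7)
The plan is to mirror the two-part structure of the proof of Theorem~\ref{thm:shark}, using the preliminary identities $\psi^2=\varphi^2$ and $\psi(\tilde x)=\widetilde{\psi(x)}$ to translate between $\psi$-orbits and $\varphi$-orbits, and then reducing to Sharkovsky's theorem for the underlying unimodal $f$. If $n$ is itself a power of two then $m=1$ is the only Sharkovsky-predecessor of $n$ not excluded by the statement, so one need only check the fixed-point case treated below; I therefore assume $n$ is not a power of two. First I would establish (Part~1$'$) that if $\psi$ has a periodic point $x$ of prime period $n$, then $\varphi$, hence $f$, has a periodic point of prime period $N$ with $N\succeq n$ in Sharkovsky's order. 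For $n$ odd with $n\geq 3$, $\psi^n(x)=\widetilde{\varphi^n(x)}=x$ forces $\varphi^n(x)=\tilde x\neq x$, so the $\varphi$-orbit of $x$ is tilde-symmetric with prime $\varphi$-period $2n$, which projects via $f$ to an $f$-orbit of prime period $n$. For $n$ even, $\psi^n=\varphi^n$ gives $\varphi^n(x)=x$; a minimality check using $\psi^k(x)\neq x$ for $k<n$ shows that the prime $\varphi$-period equals $n$, except in the tilde-symmetric subcase, which only arises when $n\equiv 2\pmod 4$ and drops the period to $n/2$ (odd). Part~1 of the proof of Theorem~\ref{thm:shark} then transfers this to $f$.

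For Part~2$'$, for each $m\prec n$ with $m>1$ and $m\neq 2^r$, I would construct a $\psi$-periodic point of prime period $m$. Sharkovsky's theorem applied to $f$ at period $N$ yields $f$-periodic points of every prime period $\prec N$, and Lemma~\ref{lem:forcing} supplies $y$ of prime $f$-period $m$ with $f^m$ decreasing, and $y'$ of prime $f$-period $m$ with $f^m$ increasing (or, in the period-doubling branch, $y'$ of prime $f$-period $m/2$ with $f^{m/2}$ decreasing). Using the formulas from Section~\ref{sec:prelim}---that $\psi^k(z)=f^k(z)$ precisely when ($k$ odd and $f^k$ decreasing at $z$) or ($k$ even and $f^k$ increasing at $z$), and $\psi^k(z)=\widetilde{f^k(z)}$ otherwise---I would read off a $\psi^m$-fixed point: for $m$ odd $\geq 3$, $\psi^m(y)=f^m(y)=y$; for $m\equiv 0\pmod 4$, either branch of Lemma~\ref{lem:forcing}(b) gives $\psi^m(y')=y'$ (directly in the increasing branch, and via $(f^{m/2})^2=f^m$ in the period-doubling branch, where $\psi^{m/2}(y')=\tilde y'$ since $m/2$ is even). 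The general fact that for any $f$-periodic point $z\neq c$ of prime period $q$ one has $\tilde z\notin\orb_f(z)$---because $\tilde z=f^j(z)$ would force $f^j(\tilde z)=\tilde z$, hence $q\mid j$---eliminates the possibility $\psi^k(\cdot)=\cdot$ for any $k$ strictly below the target $m$, providing the prime-$\psi$-period verification in every case. The case $m=1$ (not excluded, since $r\geq 1$) follows directly: on $[0,c)$, $\psi$ decreases from $\widetilde{f(0)}$ to $0$ while the diagonal increases from $0$ to $c$, forcing an intersection.

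The main obstacle is the case $m\equiv 2\pmod 4$, $m=2\ell$ with $\ell$ odd $\geq 3$: the period-doubling branch of Lemma~\ref{lem:forcing}(b) at $m$ returns $y'$ of prime $f$-period $\ell$ with $f^\ell$ decreasing, which yields only a $\psi$-$\ell$-periodic (not a $\psi$-$m$-periodic) point. My workaround is to reapply Lemma~\ref{lem:forcing} at the odd period $\ell$, which sits above $m$ in Sharkovsky's order and is therefore forced by $N$; when $\ell$ coincides with $N$ I invoke the last sentence of Lemma~\ref{lem:forcing}, applicable since $f^\ell$ is decreasing at the $\ell$-periodic point produced by Part~1$'$. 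Since $\ell$ is odd, the $\ell/2$-branch of Lemma~\ref{lem:forcing}(b) is vacuous, yielding $y''$ of prime $f$-period $\ell$ with $f^\ell$ \emph{increasing}. Then $\psi^\ell(y'')=\widetilde{f^\ell(y'')}=\tilde y''\neq y''$ and $\psi^{2\ell}(y'')=y''$, and the prime $\psi$-period of $y''$ equals $m=2\ell$ by the tilde-nonsymmetry fact above. This retreat is unavailable for $m=2^r$ with $r\geq 1$, since $m/2=2^{r-1}$ is still a power of two rather than an odd period and the period-doubling obstruction cannot be bypassed; this is precisely the structural source of the exception in the theorem.
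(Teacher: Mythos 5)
Your proposal is correct and follows the paper's two-part skeleton (transfer a $\psi$-period $n$ to an $f$-period $N\succeq n$, then push each admissible $m\prec n$ back to $\psi$ via Lemma~\ref{lem:forcing}), but the execution differs at both ends. In Part~1 the paper argues directly with $f$: for odd $n$ it picks whichever of $x,\tilde x$ has $f^n$ decreasing, so that $\psi^n=f^n$ there, and checks primality of the $f$-period on the spot; your detour through the prime $\varphi$-period ($2n$ for odd $n$, with the tilde-symmetric $\varphi$-orbit projecting to an $f$-orbit of prime period $n$) is longer but sound --- and you are right not to feed the period-$2n$ $\varphi$-orbit into Part~1 of Theorem~\ref{thm:shark}, which would report the wrong period. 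In Part~2 you are more careful than the paper: the paper asserts without further argument that for $m$ even and not a power of two one can take an orientation-preserving point of prime $f$-period $m$, which is precisely the point in doubt when Lemma~\ref{lem:forcing}(b) returns its period-doubling branch; your substitute constructions (using $\psi^{m/2}(y')=\tilde y'$ when $m\equiv 0\pmod 4$, and an orientation-preserving $\ell$-periodic point with $\psi^{\ell}(y'')=\tilde y''$ when $m=2\ell$ with $\ell$ odd) produce the required $\psi$-period-$m$ point without that assertion. Your ``general fact'' that $\tilde z\notin\orb_f(z)$ for periodic $z\neq c$ is the same computation the paper runs inline for odd $m$.

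One justification in the $m\equiv 2\pmod 4$ case is wrong as written, though easily repaired with data you already have. You claim that $\ell$ ``sits above $m$ in Sharkovsky's order and is therefore forced by $N$'': sitting above $m$ does not place $\ell$ below $N$ (take $\ell=5$, $m=10$, $N=2\cdot 3$, where $10\prec 6\prec 5$ and a map of type $2\cdot 3$ has no period-$5$ orbit at all), and your fallback to the last sentence of Lemma~\ref{lem:forcing} is invoked only ``when $\ell$ coincides with $N$,'' leaving the case $\ell\neq N$, $\ell\not\prec N$ unaddressed. The fix: you are in this subcase only because the period-doubling branch of Lemma~\ref{lem:forcing}(b) at $m$ produced a point $y'$ of prime $f$-period $\ell$ with $f^{\ell}$ decreasing at $y'$, and that is verbatim the hypothesis of the last sentence of Lemma~\ref{lem:forcing} with $n=\ell$; apply it to $y'$ (not to ``the $\ell$-periodic point produced by Part~1$'$,'' which exists only if $N=\ell$), and since $\ell$ is odd the $\ell/2$-alternative is vacuous, so you get the orientation-preserving $\ell$-periodic point $y''$ and the rest of your argument goes through. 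If $f$ has no period-$\ell$ orbit at all, the period-doubling branch cannot occur and Lemma~\ref{lem:forcing}(b) already hands you the orientation-preserving prime-period-$m$ point directly.
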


\begin{proof}
The proof for a decreasing symmetric Lorenz map $\psi$ is similar as for increasing Lorenz maps.
We only need to repeat the two claims. 
\begin{enumerate}
\item 
Let $\psi^n(x)=x$ and 
assume $\psi^k(x)\neq x$ for all $k<n$. Then the same holds for $\tilde x$.
For even $n$ the proof is the same as for $\varphi$ in Theorem~\ref{thm:shark}, so assume that $n$ is odd.
At exactly one of $x$ and $\tilde x$, say at $x$,
$f^n$ is decreasing, so $f^n(x)=x$. 
Assume $k$ is a divisor of $n$ is such that $f^k(x)=x$. 
Then $k$ and $n/k$ are odd and $f^k$ is decreasing as well, so $\psi^k(x) = x$, which is a contradiction.
\item
Assume that $f$ has a $n$-periodic point and  take $m \prec n$.
(We note that the claim does not hold for $m = n$. Indeed, if $n=m=3$ and the $3$-periodic point is emerging in a saddle node bifurcation,
then $\psi$ does not yet have a $3$-periodic point.)
By Lemma~\ref{lem:forcing}, $f$ has periodic points $x$ of prime period $m$, and if $m$ is not a power of two,
then we can take $x$ orientation preserving as well as orientation reversing. 
Assume that $f^k(x)\neq x$ for all proper divisors $k$ of $m$. 
\begin{itemize}
\item If $m$ is odd, we take $x$ orientation reversing, so that $\psi^m(x) = x$.
Suppose that $j$ is a proper divisor of $m$ such that $\psi^j(x) = x$.
Then $f^j(x) = \tilde x$ because $f^j(x) \neq x$ by assumption. Also, $x=\psi^{m-j}(\psi^j(x))=\psi^{m-j}(x)$ so we also conclude 
that $f^{m-j}(x)=\tilde x$. But then $x=f^m(x) = f^m(\tilde x) = f^{m-j}(f^j(\tilde x))
= f^{m-j}(\tilde x) = \tilde x$, a contradiction.
Therefore $m$ is the prime period of $x$ for $\psi$.

\item If $m$ is even, we take $x$ orientation preserving, so that $\psi^m(x) = x$.
Analogously as above we prove that $m$ is the prime period of $x$ for $\psi$.
\end{itemize}
\end{enumerate}
This shows that $\psi$ satisfies Sharkovsky's Theorem with the potential exception of
periodic points in the first period doubling cascade.
For instance, if $f_a(x) = 1-a(x-\frac12)^2$ with $4 > a > a_{\text{\tiny Feig}}$  (where $a_{\text{\tiny Feig}}$ is the Feigenbaum parameter, 
then $\psi$ does not have a point of prime period $2$, despite the fact that it has periods $n \succ 2$.
More generally, if $f_a$ is $r-1$ renormalisable of period $2$ (so in contrast with Theorem~\ref{thm:shark} the final renormalisation has period $2^{r-1}$),
then $\psi$ has no periodic point of period $2^r$.
The map $\psi$ always has a fixed point, so we don't need to make exceptions for fixed points.
\end{proof}

\section{Cutting times}
 We recall some notation from Hofbauer towers and kneading maps that we use later in the paper; 
for more information on these topics, see e.g.\ \cite[Chapter 6]{BB04}.  

Recall that $c$ denotes the critical point $1/2$. For $n\in\N$ denote by $c_n:=f^n(c)$. We assume that $c_2<c$ (otherwise the dynamics of $f$ is trivial).

Define inductively $D_1 := [c, c_1]$, and 
$$
D_{n+1} := \begin{cases}
           [c_{n+1}, c_1] & \text{ if } c \in D_n;\\
           f(D_n) & \text{ if } c \notin D_n.
          \end{cases}
$$
We say that $n$ is a {\em cutting time} if $c \in D_n$. The cutting times are denoted by
$S_0, S_1, S_2,  \ldots$ (where $S_0=1$ and $S_1=2$). They were introduced
in the late 1970s by Hofbauer \cite{Hof80}.
The difference between consecutive cutting times is again a cutting time (see e.g.\ Subsection 6.1 in \cite{BB04}),
so we can define the kneading map $Q:\N \to \N \cup\{0\}$ as
$$
S_{Q(k)}:=S_k - S_{k-1}.
$$
We call $f$ {\em long-branched } if $\liminf_n |D_n| > 0$, which is equivalent to
$\liminf_k |D_{S_k}| > 0$ and also to $\limsup_k Q(k) < \infty$.

A purely symbolic way of obtaining the cutting times is the following.
Recall that we use the itinerary map $\itin$ for $f$ (and also for $\varphi$) with codes $0$ for $[0,c)$ and $1$ for $(c,1]$.
We will use the modified kneading sequence $\nu=\lim_{x \nearrow c} \itin(x)=10\dots \in \{0,1\}^\N$, where we traditionally omit the zero-th symbol. 
Note that if $c$ is not periodic, $\nu=\itin(c_1)$ and the modification is only made so that the itineraries do not contain symbol $*$ 
(we take the smaller of the two sequences in parity-lexicographical ordering).

We can split any sequence $e \in \{ 0,1\}^\N$
into maximal pieces (up to the last symbol) that coincide with a prefix of $\nu$.
To this end, define
\begin{equation}\label{eq:rho-function}
\rho:\N \to \N, \quad \rho(n) = \max\{k > n : e_{n+1}e_{n+2} \dots e_{n+k-1} \text{ is prefix of } \nu\}.
\end{equation}
That is, the function $\rho$ depends on $e$ and $\nu$, but we will suppress this dependence.
When we apply this for $e = \nu$, we obtain
$$
S_0 = 1, \quad S_{k+1} = \rho(S_k),
$$
or in other words $S_k = \rho^k(1)$ for $e=\nu$ and $k \geq 0$.

Define the {\em closest precritical points}
$\zeta \in I$ as any point such that $f^n(\zeta) = c$ for some $n \geq 1$ and $f^k(x) \neq c$
for all $k \leq n$ and $x \in (\zeta,c)$.
By symmetry, if $\zeta$ is a closest precritical point, 
$\tilde\zeta = 1-\zeta$ is also a closest precritical point.
If $\zeta' \in (\zeta,c)$ is a closest precritical point of the lowest $n' > n$,
then the itineraries of $f(c)$ and $f(x)$, $x \in (\tilde\zeta',\tilde\zeta)$ coincide for exactly
$n'-2$ entries, and differ at entry $n'-1$. Hence $n'$ is a cutting time, say $n' = S_{k'}$ for some $k' \geq 1$.
We use the notation $\zeta = \zeta_{k'}$ if $n' = S_{k'}$.
That is
\begin{equation}\label{eq:ccp}
\dots  < \zeta_k < \zeta_{k+1} < \dots < c < \dots < \tilde \zeta_{k+1} < \tilde \zeta_k < 
\dots \qquad f^{S_k}(\zeta_k) = f^{S_k}(\tilde \zeta_k) = c.
\end{equation}
and 
\begin{equation}\label{eq:Upsilon}
x \in \Upsilon_k := (\zeta_{k-1}, \zeta_k] \cup [\tilde \zeta_k, \tilde \zeta_{k-1}) 
\ \Rightarrow \
\itin(f(x)) = \nu_1 \dots \nu_{S_k-1} \nu'_{S_k}\ldots
\end{equation}
Applying this to $x = f^m(c)$, we obtain that $\rho(m)-m$ is a cutting time. 

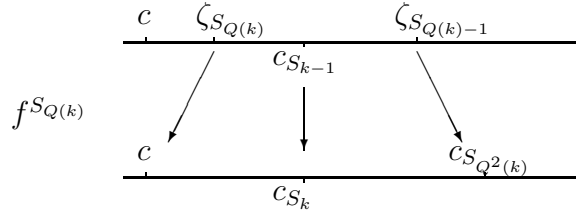
\begin{figure}[htbp]
\begin{center}
\rule{22pt}{0pt}
\begin{minipage}{145mm}
\unitlength=6mm
\begin{picture}(10,5)(-5,-0.5) \let\ts\textstyle
\thicklines
\put(2,0){\line(1,0){10}}
\put(2,3){\line(1,0){10}}
\thinlines
\put(-0.5,1.2){$f^{S_{Q(k)}}$}

\put(2.5,3){\line(0,1){0.1}} \put(2.3,3.4){$c$}
\put(4,3){\line(0,1){0.1}} \put(3.6,3.4){$\zeta_{S_{Q(k)}}$}
\put(6,3){\line(0,-1){0.1}} \put(5.3,2.5){$c_{S_{k-1}}$}
\put(8.5,3){\line(0,1){0.1}} \put(8,3.4){$\zeta_{S_{Q(k)-1}}$}

\put(2.5,0){\line(0,1){0.1}} \put(2.3,0.4){$c$}
\put(6,0){\line(0,-1){0.1}} \put(5.3,-0.5){$c_{S_k}$}
\put(10,0){\line(0,1){0.1}} \put(9.2,0.4){$c_{S_{Q^{ 2}(k)}}$}

\put(4,2.8){\vector(-1,-2){1}}
\put(6,2){\vector(0,-1){1.4}}
\put(8.5,2.8){\vector(1,-2){1}}

\end{picture}
\end{minipage}
\vskip-30pt
\end{center}
\caption{The points $\zeta_{S_{Q(k)}} < c_{S_{k-1}} <\zeta_{S_{Q(k)-1}}$
and their images under $f^{S_{Q(k)}}$.}
\label{Fig:PoscSk}
\end{figure}

In particular,
\begin{equation}\label{eq:zetaQ}
f^{S_{k-1}}(c) \in \Upsilon_{Q(k)} = (\zeta_{ Q(k)-1 }, \zeta_{ Q(k) } ] \cup [ \tilde\zeta_{Q(k)},\tilde \zeta_{Q(k)-1} ),
\end{equation}
see Figure~\ref{Fig:PoscSk},
and the larger $Q(k)$, the closer $f^{S_{k-1}}(c)$ is to $c$.

Let $\kappa = \min\{ j > 1 : \nu_j = 1\}$.
Then we can define the {\em co-cutting times} as 
$$
\hat S_0 = \kappa, \quad \hat S_{k+1} = \rho(\hat S_k),
$$
The cutting and co-cutting times are always disjoint sequences (see \cite[Lemma 2]{bruin}), and $\{ \hat S_k\} = \emptyset$ if 
$f$ is the full unimodal map (because then $\nu = 10000\dots$ and $\kappa$ is not defined).
Furthermore, there is a {\em co-kneading map} $\hat Q:\N \to \N\cup\{0\}$ such that
$$
\hat S_k = \hat S_{k-1} + S_{\hat Q(k)}.
$$

\begin{proposition}\label{prop:Qinfinity}
Let $f$ be a unimodal map with the kneading map $Q$.
 If $Q(k) \to \infty$, then $\hat Q(k) \to \infty$ and $\omega(c)$ is a minimal Cantor set.
\end{proposition}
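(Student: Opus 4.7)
The plan is to derive both conclusions of Proposition~\ref{prop:Qinfinity} from the single geometric fact that $Q(k)\to\infty$ forces arbitrarily close returns of $c$ to itself. I would organise the proof into three steps: (i) recurrence of $c$; (ii) $\hat Q(k)\to\infty$; and (iii) the Cantor-minimality of $\omega(c)$. Step (i) is immediate from \eqref{eq:zetaQ}: it gives $f^{S_{k-1}}(c)\in\Upsilon_{Q(k)}=(\zeta_{Q(k)-1},\zeta_{Q(k)}]\cup[\tilde\zeta_{Q(k)},\tilde\zeta_{Q(k)-1})$, and by \eqref{eq:ccp} the closest precritical points $\zeta_j,\tilde\zeta_j$ converge monotonically to $c$, so $Q(k)\to\infty$ yields $f^{S_{k-1}}(c)\to c$. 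Hence $c\in\omega(c)$, and $c$ is not periodic (that would force $Q$ eventually constant).

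For step (ii), I would work with the symbolic recipe $\hat S_k=\rho^k(\kappa)$ and use the fact (noted in the paragraph just before \eqref{eq:zetaQ}) that $\rho(m)-m$ is always a cutting time. It suffices to prove the stronger combinatorial statement that if $Q(k)\to\infty$, then for \emph{every} starting point $n_0\in\N$ the $\rho$-iterates $n_i=\rho^i(n_0)$ have gaps $S_{q_i}$ with $q_i\to\infty$. I would argue by contradiction and pigeonhole: if $q_{i_j}\le M$ infinitely often, then at each such step the shift $\sigma^{n_{i_j}}(\nu)$ agrees with $\nu$ for only $S_M-1$ symbols before disagreeing. On the other hand, $Q(k)\to\infty$ means that at cutting-time positions $S_{k-1}$ the shift $\sigma^{S_{k-1}}(\nu)$ agrees with $\nu$ on a prefix of length $S_{Q(k)}-1\to\infty$. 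Placing some $n_{i_j}$ inside such a long-agreement window — possible because the $n_{i_j}$ go to infinity and, by the boundedness assumption, are not too sparse — the forced disagreement after at most $S_M$ positions collides with the surrounding long agreement, and the resulting configuration contradicts the shift-maximality of $\nu$ by essentially the parity argument used in the admissibility step in the proof of Lemma~\ref{lem:forcing}. Specialising to $n_0=\kappa$ then gives $\hat Q(k)\to\infty$. This symbolic contradiction is the \textbf{main obstacle}, because it requires tracking agreement/disagreement windows of $\nu$ with its own shifts simultaneously at several scales (namely $S_M$, $S_{k-1}$, and $S_{Q(k)}$).

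Step (iii) is the standard Hofbauer-tower first-return argument: once persistent recurrence of $c$ is established, the first return map of $f$ to the symmetric neighborhood $[\zeta_k,\tilde\zeta_k]$ has only finitely many monotone branches, indexed by cutting times $\le S_k$, each of whose image covers $[\zeta_k,\tilde\zeta_k]$. Hence every point of $\omega(c)$ returns to this neighborhood within a uniformly bounded number of iterates; letting $k\to\infty$ shows that every orbit in $\omega(c)$ is dense in $\omega(c)$, which gives minimality. Perfectness follows from the recurrence of $c$ together with $f$-invariance of $\omega(c)$, and $\omega(c)$ has empty interior because the backward orbit of $c$ is dense in $[f(0),1]$ (the critical point is not periodic). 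Combining these three features yields that $\omega(c)$ is a minimal Cantor set; see \cite[Chapter 6]{BB04} for the tower machinery underlying this last step.
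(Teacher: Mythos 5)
Your step (i) is fine, and your overall architecture (recurrence of $c$, then combinatorial control of $\rho$-orbits, then minimality via uniformly bounded entry times into $[\zeta_k,\tilde\zeta_k]$) is the route taken in the sources that the paper's actual proof simply cites: the paper does not reprove anything, it quotes \cite[Lemma 3.6, Proposition 3.2]{bruinThesis} and \cite[Lemma 4, Proposition 2]{bruin} for ``$Q(k)\to\infty$ implies $\hat Q(k)\to\infty$ and $c$ is persistently recurrent'', and \cite{BL91}, \cite{B96} for ``persistent recurrence implies minimality''. You are therefore trying to reprove exactly those lemmas, and the two places where the real work sits are the places where your argument is only a gesture. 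The concrete problem with step (ii) is this: the standard structure of $\nu$ relative to its cutting times gives $\nu_{S_{k-1}+j}=\nu_j$ for $1\le j<S_{Q(k)}$, and hence for $n=S_{k-1}+t$ with $\rho(t)<S_{Q(k)}$ one gets $\rho(n)-n=\rho(t)-t$. So a position with a short $\rho$-gap (for instance $t=1$, where $\rho(1)-1=S_0=1$ because $\nu_2=0$) reproduces itself \emph{inside} every sufficiently long agreement window, at the position $S_{k-1}+t$, with no violation of shift-maximality whatsoever. A ``forced disagreement after at most $S_M$ symbols sitting inside a long-agreement window'' is therefore not a contradictory configuration --- it occurs at all scales for every kneading sequence --- and the parity argument from Lemma~\ref{lem:forcing} has nothing to bite on. What actually has to be shown is that the specific $\rho$-orbit of $\kappa$ eventually avoids all such inherited short-gap positions, which requires tracking where the co-cutting times sit relative to the cutting-time decomposition of $\N$; this is the content of \cite[Lemma 4]{bruin}. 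Your proposed strengthening (every $\rho$-orbit has gaps tending to infinity) is an even stronger assertion that the sketch does not establish.

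Step (iii) contains a second, independent gap: you write ``once persistent recurrence of $c$ is established'', but nothing in steps (i)--(ii) establishes persistent recurrence. Step (i) yields only ordinary recurrence of $c$, which is strictly weaker and does not by itself imply minimality of $\omega(c)$; persistent recurrence is precisely the other half of what the paper imports from \cite[Proposition 3.2]{bruinThesis} and \cite[Proposition 2]{bruin}. Moreover, the justification you offer for the uniformly bounded return times --- that the first return map to $[\zeta_k,\tilde\zeta_k]$ has finitely many monotone branches each mapping onto $[\zeta_k,\tilde\zeta_k]$ --- is not correct as stated (the return branches need not be onto, and their number and lengths are exactly what the missing combinatorial lemma is supposed to control: one needs the gaps of $\{m:\rho(m)-m>S_k\}$ along the orbit of $c_1$ to be bounded). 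So the proposal is structurally sensible but leaves both essential lemmas unproved; to make it a genuine alternative to the paper's citation-based proof you would have to supply the arguments of \cite[Lemma 4 and Proposition 2]{bruin} in full.
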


\begin{proof} In \cite[Lemma 3.6 and Proposition 3.2]{bruinThesis} and \cite[Lemma 4 and Proposition 2]{bruin} 
it was shown that $Q(k) \to \infty$
implies $\hat Q(k) \to \infty$ and that $c$ is persistently recurrent.
This property was introduced by Blokh and implies minimality of $\omega(c)$, see
\cite{BL91} and also \cite[Section 3]{B96}.
\end{proof}

In fact, $\limsup_k Q(k) = \infty$ implies that $\limsup_k \hat Q(k) = \infty$, but not vice versa.
If both $\limsup_k  Q(k) < \infty$ and $\limsup_k \hat Q(k) < \infty$, then $c$ is non-recurrent,
but as we will see in Section~\ref{sec:outer}, there are maps where $\limsup_k  Q(k) < \limsup_k \hat Q(k) = \infty$.

\section{Sturmian shifts}\label{sec:Sturm}
There are multiple ways of defining Sturmian shifts and we take the one using the symbolic dynamics of circle rotations.

\begin{definition}\label{def:rotational}
 Let $R_\alpha:\S^1 \to \S^1$, $x \mapsto x+\alpha \bmod 1$, be the rotation over an irrational angle $\alpha$.
 Let $\beta \in \S^1$ and build the itinerary $u = (u_n)_{n \geq 0}$ by
 \begin{equation}\label{eq:itin_rotation} 
 u_n = \begin{cases}
        1 & \text{ if } R_\alpha^n(x) \in [0,\alpha),\\
        0 & \text{ if } R_\alpha^n(x) \notin [0,\alpha).
       \end{cases}
\end{equation}
Then $u$ is called a rotational sequence. The minimal (and uniquely ergodic) shift space obtained as
$X_\alpha = \overline{\{ \sigma^n(u) : n \in \N\}}$ is the \emph{Sturmian shift} of frequency $\alpha$,
and each $x \in X_\alpha$ is called a Sturmian sequence.
\end{definition}

The purpose of this section is to describe cases when unimodal maps restricted to their critical 
omega-limit sets $\omega(c)$ are conjugate to Sturmian shift.
There are in fact multiple ways of choosing the kneading map $Q$ so that $(\omega(c), f)$ is Sturmian.
The simplest way is by means of the Ostrowski numeration, see \cite{Ostr21}.
Indeed, let $\alpha \in I$ be some irrational number and let $p_n/q_n$ be the convergent of its continued fraction 
expansion.
Thus $q_{-1} = 0$, $q_0=1$ and $q_n = a_n q_{n-1} + q_{n-2}$.
Take $k_n = \sum_{j=0}^n a_j$ and then cutting times as follows:
$$
\begin{cases}
 S_k = k+1 & \text{ for } 0 \leq k \leq a_1,\\
 S_{k_n} = q_n & \text{ for } n \geq 1,\\
 S_{k_n+a} = aq_n + q_{n-1} & \text{ for } 1 \leq a \leq a_n, \ n \geq 1.
\end{cases}
$$
It is clear that $Q(k) \to \infty$ in this case, and the $\{ S_k \}$ interpolate between the numbers $q_n$, see also
\cite{BKS98}. However, $f:\omega(c) \to \omega(c)$ is in general not invertible, 
since $c$ itself and/or other points in the backward orbit of $c$ have two preimages in $\omega(c)$, see also \cite{Br99}.
As such $(\omega(c), f)$ is conjugate to the one-sided Sturmian shift.

However, also when $Q(k)$ is bounded (in fact also when $Q(k) \leq 1$) there
are examples where $(\omega(c), f)$ is Sturmian, see \cite[Chapter III, 3.6]{bruinThesis}.
Let $\varphi:I \to I$ be an increasing symmetric Lorenz map as in previous sections.
In addition to $\itin$,
another way of coding orbits of unimodal maps (used by Milnor \& Thurston \cite{MT},
Collet \& Eckmann \cite{CE} and Derrida et al.\ \cite{DGP}) 
 is as follows: set $\theta_0(x) = +1$ and for $n \geq 1$,
\begin{equation}\label{eq:theta}
 \theta_n(x) = \prod_{j=0}^{n-1} (-1)^{\itin_j(x)} = \begin{cases}
                +1 & \text{ if $f^n$ is increasing at } x;\\
                -1 & \text{ if $f^n$ is decreasing at } x.
               \end{cases}
\end{equation}
It follows that $\theta(f(x)) = \sigma(\theta(x))$ if $\itin_0(x) = 0$ and
$\theta(f(x)) = -\sigma(\theta(x))$ if $\itin_0(x) = 1$.
For the itinerary $\itin^\varphi$ of  $x \in I \setminus \bigcup_{j=0}^n \varphi^{-j}(c)$ under the function $\varphi$
this means that
$$
\itin_n^\varphi(x) = 0 \Leftrightarrow
\begin{cases}
 \itin_n(x) = 0 \text{ and } \theta_n(x) = + 1, \\
 \text{or} & \\
 \itin_n(x) = 1 \text{ and } \theta_n(x) = - 1. 
\end{cases}
\Leftrightarrow
\theta_{n+1}(x) = +1,
$$
and
$$
\itin_n^\varphi(x) = 1 \Leftrightarrow
\begin{cases}
 \itin_n(x) = 1 \text{ and } \theta_n(x) = + 1, \\
 \text{or} & \\
 \itin_n(x) = 0 \text{ and } \theta_n(x) = - 1. 
\end{cases}
\Leftrightarrow
\theta_{n+1}(x) = -1,
$$
In other words, $\itin^\varphi_n = (1-\theta_{n+1}(x))/2$.
This gives $\itin^\varphi \circ \varphi(x) = \sigma \circ \itin^\varphi(x)$.
Also, if $\nu^\varphi = \lim_{x \nearrow c} \itin^\varphi(x)$ with the first symbol neglected,
and defined
$\rho^\varphi(n) = \min\{ k > n : \nu^\varphi_k = \nu^{\varphi}_{k-n}\}$, then we recover the cutting times
as $S_0= 1$, $S_{k+1} = \rho^\varphi(S_k)$.
(The co-cutting times can be recovered as $\hat S_0 = \kappa = \min\{ k \geq 1 : \nu^\varphi_k = 0\}$
and $\hat S_{i+1} = \min\{ k > \hat S_i : \nu^\varphi_k \neq \nu^\varphi_{k-\hat S_i}\}$.) See the example in the proof of Proposition~\ref{prop:rotnumber}.

To each $x \in I$ we can assign a rotation number by first assigning
a lift $\Phi:\R \to \R$ to the Lorenz map $\varphi$:
$$
\Phi(x) = \begin{cases}
           \varphi(x) & \text{ if } x \in [0,c], \ \varphi(c) = 1;\\
           \varphi(x) + 1 & \text{ if } x \in (c,1);\\
           \Phi(x-n)+n & \text{ if } x \in [n,n+1).
          \end{cases}
$$
Then $\Phi(x) \bmod 1 = \varphi(x \bmod 1)$ and the rotation number is defined as
\begin{equation}\label{eq:rot}
\alpha(x) = \limsup_{n\to\infty} \frac{\Phi^n(x)-x}{n},
\end{equation}
Since $\lfloor \Phi(x) \rfloor = \lfloor x \rfloor$ if and only if $x \bmod 1 \in [0,c)$
and $\lfloor \Phi(x) \rfloor = \lfloor x \rfloor + 1$ otherwise, we obtain
\begin{eqnarray}\label{eq:rotrho}
\alpha(x) &=& \limsup_{n\to\infty} \frac1n \#\{ 0 \leq k < n : \itin^\varphi_k(x) = 1 \}  \\
&=& \limsup_{n\to\infty} \frac1n \#\{ 1 \leq k \leq n : \theta_k(x) = -1 \}. \nonumber
\end{eqnarray}

Next we turn $\varphi$ into a proper circle endomorphism (with unique rotation number independent of $x \in \S^1$)
by setting:
$$
\bar\varphi(x) = \begin{cases}
               \varphi(1)=\widetilde{f(1)}, & x \in [0,a];
               \text{ where } a < c \text{ is such that } \varphi(a)=\varphi(1),\\
               \varphi(x), & \text{otherwise.}
              \end{cases}             
$$
Also let $b > c$ be such that $\varphi(b) = a$, see Figure~\ref{fig:barLorenz}.

\begin{figure}[ht]
\begin{center}
\begin{tikzpicture}[scale=0.9]
\draw[-, draw=black] (0,0) -- (4,0) -- (4,4) -- (0,4) -- (0,0) -- (4,4); 
 \draw[-, thick] (0,3.5) -- (1.05, 3.5);
 \draw[-, thick] (1.05,3.5) .. controls (1.4, 3.9) .. (2,4); 
\draw[-] (1.05,0) -- (1.05,0.1); 
\draw[-] (3.2,0) -- (3.2,0.1); 
\draw[dashed] (3.2,0) -- (3.2,1.05)--(1.05,1.05)--(1.05,0); 
 \draw[-, thick] (4,3.5) .. controls (3, 0.2) .. (2,0);
\node at (2,-0.2) {\tiny $c=\frac12$}; \node at (1.05,-0.2) {\tiny $a$}; \node at (-0.35,3.5) {\tiny $\varphi(1)$};  
\node at (1.6, 3.5){$\bar \varphi$}; \node at (3.2,-0.2) {\tiny $b$};
\end{tikzpicture}
\caption{\label{fig:barLorenz} A stunted symmetric Lorenz map $\bar{\varphi}$ as a circle endomorphism.}
\end{center}
\end{figure}
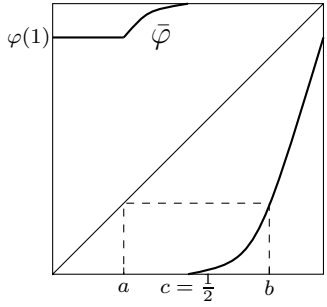 

The circle endomorphism $\bar{\varphi}$ obtained from $\varphi$ was already studied in the last section of \cite{bruinThesis}.

\begin{proposition}\label{prop:rotnumber}
Assume that $f$ is a unimodal map with cutting times $\{ S_j\}_{j \geq 0}$.
Let $b > c$ be such that $\bar{\varphi}(b)=a$, see Figure~\ref{fig:barLorenz}.
Then the rotation number of the corresponding $\bar\varphi$ equals
 $$
 \alpha = \begin{cases}
         \frac{k}{S_k} \in [\frac12, 1] \cap \Q & \text{ if $k$ is minimal such that } 
         f^{S_k}(c) \in (\hat b,b),\\[1mm]
         \lim_{k \to \infty} \frac{k}{S_k} \in [\frac12,1] & \text{ if no such $k$ exists.}
        \end{cases}
 $$
In the latter case, the kneading map $Q(j) \leq 1$ for all $j \in \N$,
and if $\alpha \notin \Q$, then $f:\omega(c) \to \omega(c)$ is a minimal homeomorphism.
\end{proposition}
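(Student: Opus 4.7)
The strategy is to read off the rotation number from the $\bar\varphi$-orbit of $c$. The key observation is that $\bar\varphi$ maps $(c,b]$ into the plateau $[0,a]$, which is then sent to the constant $\varphi(1)=\bar\varphi^2(c)$; hence the orbit resynchronises with that of $\varphi(1)$ two steps after any visit to $(c,b]$. Before such a visit, \eqref{eq:ori} gives $\bar\varphi^j(c)=\varphi^j(c)\in\{c_j,\tilde c_j\}$, and the orbit visits $(c,b]$ at step $j$ exactly when $c_j\in(\hat b,b)\setminus\{c\}$, the two sides of $c$ corresponding to the two parity cases in \eqref{eq:ori}.

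For the rational case, let $k$ be minimal with $c_{S_k}\in(\hat b,b)$. Using \eqref{eq:Upsilon} and \eqref{eq:zetaQ}, close returns of the critical orbit to $c$ occur only at cutting times, so no earlier $c_j$ lies in $(\hat b,b)$. Consequently the $\bar\varphi$-orbit of $\varphi(1)$ is periodic of period $S_k$, and \eqref{eq:rotrho} gives
\[
\alpha \;=\; \frac{1}{S_k}\#\{0\le j<S_k:\itin^\varphi_j(\varphi(1))=1\}.
\]
Translating via $\itin^\varphi_n=(1-\theta_{n+1})/2$ and inducting on $k$ using the recursive structure of the kneading sequence at cutting times via $\rho$, this count equals $k$, yielding $\alpha=k/S_k$. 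The bound $\alpha\ge\tfrac12$ follows from the right-branch flipping in $\bar\varphi$, which generates a right-side visit at least every two steps.

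Suppose next $c_{S_k}\notin(\hat b,b)$ for every $k$. Then the $\bar\varphi$-orbit of $c$ agrees with the $\varphi$-orbit for all time, and by uniqueness of the rotation number for the monotone degree-one map $\bar\varphi$, the count above applied at the cutting-time truncations $n=S_k$ gives $\alpha=\lim_k k/S_k$. For the bound $Q(j)\le 1$: if $Q(j)\ge 2$ for some $j$, then by \eqref{eq:zetaQ} we have $c_{S_{j-1}}\in\Upsilon_{Q(j)}\subset(\zeta_1,\tilde\zeta_1)$; a direct computation using $f(a)=\tilde c_2$, $f(b)=\tilde a$, and the standing hypothesis $c_2<c$ shows $b>\tilde\zeta_1$, so $(\zeta_1,\tilde\zeta_1)\subset(\hat b,b)$, contradicting the assumption at $k=j-1$.

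Finally, for $\alpha\notin\Q$, classical theory of degree-one circle endomorphisms with a flat spot (see \cite{bruinThesis}) provides a monotone semiconjugacy $h:\S^1\to\S^1$ with $h\circ\bar\varphi=R_\alpha\circ h$. The minimal set of $\bar\varphi$ is $\omega(c)$, which is therefore a Cantor set; since $c\notin\omega(c)$ in this regime (as $c_{S_k}$ stays bounded away from $c$ and cutting times capture the closest returns), the jump of $\bar\varphi$ does not affect $\omega(c)$, so $\bar\varphi|_{\omega(c)}$ is a homeomorphism. This transfers to $f|_{\omega(c)}$ via the involution $\tilde{\,\cdot\,}$ relating $f$ and $\bar\varphi$ on orbits (cf.\ \eqref{eq:ori}). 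The main technical obstacle lies in the counting step: proving that exactly $k$ of the first $S_k$ values of $\itin^\varphi(\varphi(1))$ are ones. This inductive argument requires careful bookkeeping between $Q$, $\theta$, and the alternating sides on which successive $c_{S_j}$ approach $c$.
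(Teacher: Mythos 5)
Your overall strategy (track when the $\bar\varphi$-orbit of $c$ falls into the plateau, read off the period $S_k$, and count windings via $\theta$) is the paper's strategy, but three load-bearing steps are wrong or missing.

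The most serious problem is your characterization of when the orbit enters the plateau: you assert that the orbit visits $(c,b]$ at step $j$ \emph{exactly} when $c_j\in(\tilde b,b)\setminus\{c\}$, supported by ``close returns of the critical orbit to $c$ occur only at cutting times.'' Neither is true. By \eqref{eq:ori}, $\bar\varphi^j(c)$ is whichever of $c_j,\tilde c_j$ the orientation of $f^j$ selects, so $\bar\varphi^j(c)\in(c,b]$ requires \emph{both} $c_j\in(\tilde b,b)$ \emph{and} the orientation that places the Lorenz iterate to the right of $c$; that orientation condition is precisely what makes $j$ a cutting time. Close returns from the left occur at \emph{co-cutting} times, and these can satisfy $c_{\hat S_i}\in(\tilde b,b)$ without the orbit ever touching the plateau; in the irrational case they must, since $c_{\hat S_i}\to c$ (this is exactly the cutting/co-cutting asymmetry discussed in Section~\ref{sec:outer}). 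Taken literally, your ``exactly when'' would force the orbit into the plateau at the first co-cutting-time return to $(\tilde b,b)$ and would make the second case of the proposition vacuous whenever $\alpha\notin\Q$. The same false claim reappears when you assert $c\notin\omega(c)$ ``as cutting times capture the closest returns'': in fact $c\in\omega_f(c)$ because $c_{\hat S_i}\to c$.

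Second, the identity $\#\{0\le j<S_k:\itin^\varphi_j(\varphi(1))=1\}=k$ is the heart of $\alpha=k/S_k$, and you defer it, naming it yourself as ``the main technical obstacle.'' It is not an obstacle once one observes that minimality of $k$ forces $c_{S_j}\notin(\tilde b,b)$ for $j<k$, hence $Q(j)\le 1$ for $j\le k$, hence $\nu_1\dots\nu_{S_k}$ is the initial $1$ followed by $k$ blocks $0$ or $11$, each of which contributes exactly one index with $\theta=-1$; you prove $Q\le 1$ only in the second case, where the count is not needed. Third, ``transfers to $f|_{\omega(c)}$ via the involution'' is not an argument: the factor map $f:\omega_\varphi(c)\to\omega_f(c)$ could a priori identify $x$ with $\tilde x$, and ruling this out is where the real work lies (the paper shows that a doubly-covered point would, by minimality, produce an orbit point near $c_2=f(1)$ with an $f$-preimage inside $[0,a]$, contradicting $\omega_{\bar\varphi}(c)\cap[0,a]=\emptyset$). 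On the positive side, your placement $b>\tilde\zeta_1$ is correct (since $f^2(b)=\tilde c_2>c$ while $f^2<c$ on $(c,\tilde\zeta_1)$), so $(\zeta_1,\tilde\zeta_1)\subset(\tilde b,b)$ and your derivation of $Q(j)\le 1$ in the second case is clean --- arguably cleaner than the paper's, whose claim $b\in(\tilde\zeta_2,\tilde\zeta_1)$ looks like a slip.
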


\begin{proof}
Recall that $f(c) = 1$ and assume that there is a minimal integer $n \geq 1$ such that 
$\varphi^n(1) \in (c,b]$. Then $\bar \varphi^{n+1}(1) \in (0,a]$ and
 $\bar \varphi^{n+2}(1) = \bar\varphi(1)$ is periodic with period $n+1$.

Recall that $b>c$ is such that $\bar\varphi(b) = a$,  so $f(b)=\tilde a>c$, and
$f^2(b) = f(a) = \widetilde{f^2(c)} > c$.
Therefore $b \in (\tilde\zeta_{2}, \tilde\zeta_{1})$
for closest precritical points $\tilde\zeta_1 > \tilde\zeta_2 > c$,
see \eqref{eq:ccp}, and $\tilde b\in(\zeta_1, \zeta_2)$.
There are two possibilities:
\begin{itemize}
 \item $\varphi^n(1) = f^n(1)$. In this case $f^n$ is increasing at $1$ and thus
 $n +1 = S_k$ is a cutting time.
  \item $\varphi^n(1) = \widetilde{f^n(1)}$. In this case $f^n$ is decreasing at $1$ and again
 $n+1 = S_k$ is a cutting time.
\end{itemize}
By minimality of $k$, $f^{S_j}(c) \notin [\tilde b,b] \setminus \{ c \}$ for all $j < k$, and hence
the kneading sequence $\nu$ of $f$ consists of blocks $0$ or $11$. For example:
\begin{eqnarray*}
\nu &=& \ \ 1.\ \ \, \ 0.\ \ \ \ 0.\ \ \ \ 1\ \ \ \, 1.\ \ \ \, 0.\, \ \ \ \, 1\ \ \ \, 1.\ \ \ \, 0.\ \ \, \ \, 1\ \ \ \ \boldsymbol{1}.\ \ \ \ 1\ \ \ \ 0\ \  \ 1 \dots \\
\theta &=& +1\ -1\ -1\ -1 \ +1  -1 \ -1 +1 \ -1 -1 +1 \ -1 \ +1 +1 -1 \dots \\
\nu^\varphi &=& \ \ 1.\ \ \, \ 1.\ \ \ \ 1.\ \ \ \ 0\ \ \ \, 1.\ \ \ \, 1.\, \ \ \ \, 0\ \ \ \, 1.\ \ \ \, 1.\ \ \, \ \, 0\ \ \ \ \boldsymbol{1}.\ \ \ \ 0\ \ \ \ 0\ \  \ 1 \dots \\
\end{eqnarray*}
where dots indicate cutting times and the bold symbol the position $S_k$.
Since $n+1$ is the period of $\bar\varphi(1)$, this shows that $\#\{ 1 \leq j \leq S_k : \theta_j = -1\} = k$, 
and in view of \eqref{eq:rotrho} we have $\alpha = k/S_k$. 

If there is no such minimal $n$, i.e., $\varphi^n(1)\notin (\tilde b,b)$ for all $n\geq 1$,
then $f^{n}(1) \notin (\tilde b,b)$ for all $n \geq 1$ (and in particular $Q(j) \leq 1$)
for all $j \geq 1$.
A counting argument similar to the above shows that 
$\alpha = \limsup_{k\to\infty}k/S_k = \lim_{k\to\infty}k/S_k$.
It is possible that $\alpha$ is rational, e.g., for the logistic map $f_a(x) = 1-a(x-\frac12)^2$ with
$a = 3.5097$. In this case, $\nu = (101)^\infty$ and $\bar\varphi^i(1)$ converges to an attracting orbit of period $3$.
Also for the tent map $T(x) = 1-\lambda|x-\frac12|$ with $\lambda = \frac12(1+\sqrt{5})$, the critical orbit
$\{ \frac12, 1, \frac34-\frac14\sqrt{5}\}$ has period three and avoids $[0,a]$.

If $\alpha \notin \Q$, then $\omega_{\bar\varphi}(c)$ is the Cantor set, disjoint from $[0,a]$ and 
minimal w.r.t.\ the action of $\bar\varphi$.
Under the semi-conjugacy $f$ between $f$ and $\varphi$ (indeed $f \circ f = f \circ \varphi$), this 
projects to a minimal map $f:\omega_f(c) \to \omega_f(c)$.
We will show that $f:\omega_{\varphi}(c) \to \omega_f(c)$ is in fact a homeomorphism, from which it follows that  $f:\omega_{f}(c)\to\omega_f(c)$ is also a homeomorphism.
Assume by contradiction that $x < c < \tilde x$ are points in $\omega_{\varphi}(c)$ 
such that $f(x) = f(\tilde x) = y \in \omega_f(c)$.
Then, since $f$ is the semi-conjugacy between $\varphi$ and $f$, we must have
$f(\varphi^n(\tilde x)) = f(\varphi^n(x)) = f^n(y)$ for every $n\in\N$. Note that $\varphi^n(\tilde x)=\widetilde{\varphi^n(x)}$ 
for every $n\in\N$, and thus $\varphi^n(\tilde x)\neq \varphi^n(x)$, unless $\varphi^n(x)=c$, and thus $f^n(x)=c$. Since $c$ is 
not periodic, there exists $N\in\N$ such that $\varphi^n(\tilde x)\neq \varphi^n(x)$ for all $n\geq N$, and thus $f^n(y)$ has 
two $f$-preimages in $\omega_{\varphi}(c)$.
Since $f\colon\omega_f(c)\to\omega_f(c)$ is minimal, for every $\eps>0$ there exists infinitely many  $y'\in\orb_f(y)$ 
which are $\eps$-close to $f^2(c)=f(1)$. For sufficiently small $\eps$, an $f$-preimage of a point $\eps$-close to $f(1)$ 
will be contained in $[0,a]$. Since every point in $\orb_f(y)$ eventually has both $f$-preimages in $\omega_{\varphi}(c)$, 
we conclude that $\omega_{\bar\varphi}(c)\cap [0,a]=\omega_{\varphi}(c)\cap[0,a]\neq\emptyset$, which is a contradiction. 
\end{proof}

We argued so far that there exist stunted Lorenz maps for which $\overline{\orb_{\bar\varphi}(c)}$
is a Cantor set with dynamics similar to circle rotations (in fact to Denjoy circle maps)
with irrational rotation number, and that there are also unimodal maps with kneading map bounded by $1$, such that
$f|_{\omega(c)}$ is semi-conjugate to a circle rotation, and in fact, the rotation number
is $\alpha = \lim_{k\to\infty}k/S_k$.
Therefore $(\omega(c), f)$ represents a Sturmian shift.

In fact, every irrational rotation number (hence every Sturmian shift) can be realized this way,
as we can prove by studying this rotation number closer.
Indeed, let $\alpha = [0;a_1,a_2,a_3,\dots]$ be the continued fraction expansion of $\rho$,
with convergents $\frac{p_i}{q_i}$.
For the irrational rotation $R_\alpha$, the denominators $q_i$ are the times of
closest returns of any point $x \in \S^1$ to itself, and these returns occur alternatingly on the left and on the right.
If we assume that $R_\alpha^{q_i}(x)$ is to the right of $x$, and set $A_{q_i} = [x,R_\alpha^{q_i}(x)]$,
then the first iterate $k$ such that $R_\alpha^k(A_{q_i}) \owns x$ is $k = q_{i+1}$ and $R_\alpha^{q_{i+1}}(x)$ is to the left of $x$.

For the map $\bar\varphi$, the closest returns on the left indeed accumulate on $c$, but the right neighborhood
$[c,b)$ is the preimage of the plateau $[0,a)$ and no further iterates of $c$ enter that region.
Instead, returns on the left accumulate on $b$.

Translating this back to the unimodal map $f$ with kneading sequence $\nu = \nu_1\nu_2\nu_3\dots$, 
the closest returns on the left correspond to
closest returns at co-cutting times (recall that there are no cutting times
$S_j$ so that $f^{S_j}(c) \in (\tilde b,b)$).
If $q_i$ is such a co-cutting time, then (recalling the function $\rho$ from \eqref{eq:rho-function}
and using the above argument), the Farey convergents\index{Farey convergents}
$\rho^a(q_i) = q_i+aq_{i+1}$ are also the next co-cutting times for $1 \leq a \leq a_{i+1}$,
and in particular, $\rho^{a_{i+1}}(q_i) = q_{i+2}$.

The closest returns on the right correspond to cutting times, but this time $f^{q_i}(c)$
accumulate on $b$, and because $f^3(b) = f^3(c)$, the itinerary of $b$ is 
\begin{equation}\label{eq:ib}
\itin(b) = b_1b_2b_3b_4b_5 \dots = 11\nu_3\nu_4\nu_5 \dots
\end{equation}
Therefore we need to consider the analogous function $\rho_b(m) = \min\{ n > m : b_n \neq b_{n-m}\}$,
and find that $\rho_b^a(q_i) = q_i+aq_{i+1}$  for $1 \leq a \leq a_{i+1}$,
and in particular, $\rho_b^{a_{i+1}}(q_i) = q_{i+2}$.

For example, if $a_i \equiv 2$, so the $q_i$s are the Pell numbers $2,5,12,29,70,189,\dots$, 
then we obtain
$$
\nu = 1\boldsymbol{0}.1'1.\boldsymbol{1}'1.0.11.11.\boldsymbol{0}.11.11.1'1.0.11.11.0.11.11.\boldsymbol{1}'1.0\dots
$$
where dots indicate cutting times and primes co-cutting times. The bold symbols indicate the positions $q_i$.
In fact, for each $i$
$$
 \nu_{q_{i+1}-q_i+1} \dots \nu_{q_{i+1}-1}\nu_{q_{i+1}} =
 \nu_1 \dots \nu_{q_i-1} \nu_{q_i} \text{ or }   \nu_1 \dots \nu_{q_i-1} \nu'_{q_i} \qquad
 \text{ for each even } i \in \N,
$$
and therefore $c$ has two limit itineraries $\lim_{x \nearrow c} \itin(x) = 0\nu$ and $\lim_{x \searrow c} \itin(x) =1\nu$,
but $c$ has only one preimage in $\omega(c)$.

\section{Outside maps and unimodal inverse limit spaces}\label{sec:outer}
Boyland, de Carvalho and Hall in \cite[Section 3]{BdCH} present a different way of creating a circle endomorphism 
from a unimodal map.
They call this the {\em outside map} $B$, and use it to study the inverse limit space of
the unimodal map as attractors of sphere homeomorphisms.
Starting from a unimodal map $f:I \to I$ such that the second branch is surjective
(i.e., $f([c,1]) = I$), they double the interval to a circle $\R / 2\Z = [0,2]/_{0 \sim 2}$, 
and let $B$ map the second branch onto $[1,2]$ by flipping this branch, and then extend the definition of $f$ on $[1, 2-d]$
for the unique point $d \in (c,1]$ for which $f(d)=f(0)$
to cover the interval $[0, f(0)]$. The remaining interval $[2-d,2]$
is then mapped to the constant $f(0)$.
That is
$$
B(x) = \begin{cases}
f(x) & \text{ if } x \in [0,c);\\
2-f(x) & \text{ if } x \in [c,1);\\
f(2-x)& \text{ if } x \in [1,2-d);\\
f(0) & \text{ if } x \in [2-d,2),
\end{cases}
$$
see Figure~\ref{fig:Outside-Lorenz}.
Let us carry this out for the \emph{family of cores of tent maps} $T_{\lambda}\colon I\to I$, 
$$T_{\lambda}=\begin{cases}
{\lambda}x+2-{\lambda}, & x\in[0,\frac{{\lambda}-1}{{\lambda}}],\\
-{\lambda}x+{\lambda}, & x\in[\frac{{\lambda}-1}{{\lambda}}, 1],
\end{cases}$$
for all ${\lambda}\in(1,2]$.

Then the map
\begin{equation}\label{eq:outsidemap}
 \bar\varphi(x) = \begin{cases}
                   \frac{\lambda}{2} & \text{ if } 0 \leq x \leq a = \frac{\lambda-1}{\lambda};\\[2mm]
                   \lambda (x-\frac12) \bmod 1 & \text{ if } a = \frac{\lambda-1}{\lambda} \leq x < 1,
                  \end{cases}
\qquad \text{ on } \R/\Z
\end{equation}
and the outside map
$$
 B(x) = \begin{cases}
                   \lambda (x-1) + 2 \bmod 2 & \text{ if } 0 \leq x < \frac{2}{\lambda}; \\[2mm]
                   2-\lambda & \text{ if } \frac{2}{\lambda} \leq x < 2,\\
                  \end{cases}
\qquad \text{ on } \R/2\Z
$$
are conjugate with conjugacy $G\colon\R/\Z\to \R/2\Z$,  $G(x) = 2(1-x) \bmod 2$,
i.e., $G \circ \bar\varphi = B \circ G$.
But the conjugacy reverses orientation, so the rotation numbers are each others opposite, $\alpha$ for $\bar\varphi$
versus $1-\alpha$ for $B$.

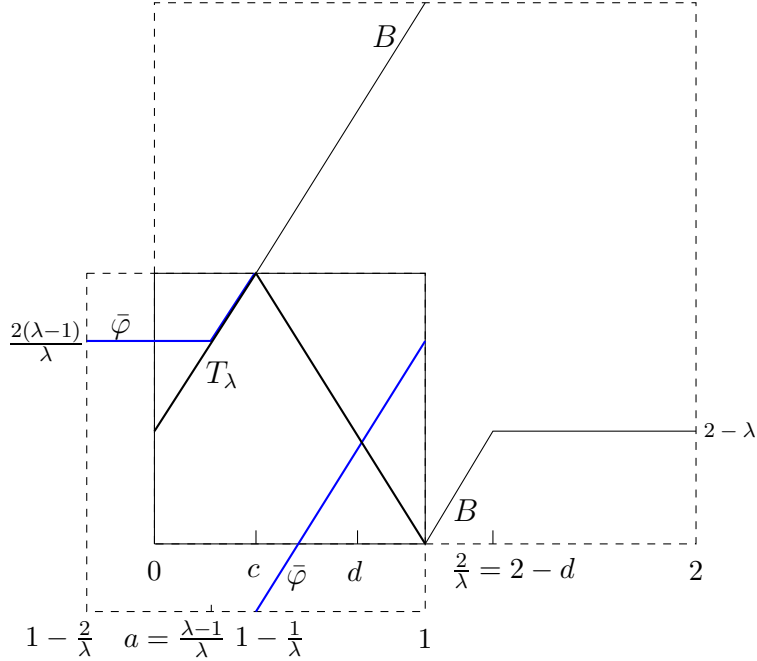
\begin{figure}[ht]
\begin{center}
\begin{tikzpicture}[scale=0.9]
\draw[-, draw=black] (1,1) -- (1,5) -- (5,5) -- (5,1) -- (1,1); 
\draw[-, dashed] (0,5) -- (5,5) -- (5,0) -- (0,0) -- (0,5);
\draw[-, dashed] (1,1) -- (1,9) -- (9,9) -- (9,1) -- (1,1);
\draw[-, draw=blue, thick] (0,4) -- (1.83,4) -- (2.48,5);
\draw[-, draw=blue, thick] (2.5,0) -- (5,4);
\draw[-, draw=black] (5,1) -- (6,2.6666) -- (9,2.6666);
\draw[-, draw=black] (2.5,5)--(5,9);
\draw[-, thick] (1,2.666) -- (2.5,5) -- (5,1);
\node at (9,0.6) {\small $2$}; 
\node at (1, 0.6) {\small $0$}; 
\node at (1.3,-0.4) {\small $a = \frac{\lambda-1}{\lambda}$};\draw[-, draw=black] (1.84,0) -- (1.84,0.1);
\node at (6.3,0.6) {\small $\frac{2}{\lambda}=2-d$}; \draw[-, draw=black] (6,1) -- (6,1.2); 
\node at (3.96,0.6) {\small $d$}; \draw[-, draw=black] (4,1) -- (4,1.2); 
\node at (2.46,0.6) {\small $c$}; \draw[-, draw=black] (2.5,1) -- (2.5,1.2); 
\node at (-0.4,-0.4) {\small $1-\frac{2}{\lambda}$};
\node at (2.7,-0.4) {\small $1-\frac{1}{\lambda}$};
\node at (5, -0.4) {\small $1$};  
\node at (9.5,2.666) {\tiny $2-\lambda$};  
\node at (-0.6,4) {\small $\frac{2(\lambda-1)}{\lambda}$};
\node at (2, 3.5){$T_\lambda$}; 
\node at (0.5, 4.2){$\bar \varphi$};  \node at (3.1, 0.5){$\bar \varphi$}; 
\node at (5.6, 1.5){$B$};  \node at (4.4, 8.5){$B$};  
\end{tikzpicture}
\caption{\label{fig:Outside-Lorenz} Constructing the outside and stunted Lorenz map for a tent map $T_\lambda$.}
\end{center}
\end{figure}

Outside map $B$ was used in \cite{BdCH} to give a complete description of the prime end and accessible 
points structure in unimodal inverse limits embedded in the plane as attractors of an 
orientation-preserving homeomorphism of the plane (or the two-dimensional sphere $\S^2$).

In the rest of the section we restate some results from \cite{BdCH} and relate them to the established 
conjugacy between maps $B$ and $\bar{\varphi}$.

Recall that $I$ denotes the unit interval $[0,1]$. The \emph{inverse limit space with the bonding map $g:I\to I$} 
is a subspace of the Hilbert cube $I^{\N_0}$ defined by
$$
\underleftarrow{\lim}(I, g):=\{(x_0,x_1,x_2,\ldots)\in I^{\N_0}: g(x_{i+1})=x_{i}, \ i\in \N_0 \}.
$$
Equipped with the product topology, the space $\underleftarrow{\lim}(I, g)$ is a {\em continuum}, 
i.e., compact and connected metric space. Define the \emph{shift homeomorphism}
$\widehat{g}:\underleftarrow{\lim}(I,g)\to \underleftarrow{\lim}(I,g)$, 
$$
\widehat{g}((x_0,x_1,\ldots)):=(g(x_0),x_0,x_1,\ldots) \text{ for } 
(x_0,x_1,\ldots)\in \underleftarrow{\lim}(I, g).
$$

There is a natural way to make $\underleftarrow{\lim}(I, g)$ an attractor of an orientation preserving 
sphere homeomorphism. 
Such embeddings are called Barge-Brown-Martin embeddings (abbreviated BBM embeddings), see \cite{BM} 
for the original construction, \cite{BdCH1} for generalisation of the construction to parametrised 
families and \cite{BdCH} for the construction applied to unimodal inverse limits.
As the outcome of the BBM embedding of $\underleftarrow{\lim}(I, g)$, one obtains an orientation-preserving homeomorphism $H\colon\S^2\to \S^2$ so that 
$H|_{\underleftarrow{\lim}(I, g)}$ is topologically conjugate to 
$\widehat{g}$ and for every $x\in \S^2\setminus \{ \text{point}\}$, and $\omega(x,H)$ 
is contained in $\underleftarrow{\lim}(I, g)$.

In \cite{BdCH} the authors study in detail BBM embeddings of inverse limits of unimodal maps satisfying certain (mild) conditions, 
which are in particular satisfied for the tent map family $T_{\lambda}$, $\lambda\in(\sqrt 2, 2]$. 
For simplicity we state the following results for tent maps only, noting that they can be generalized to a much wider class of unimodal maps.

Fix the slope $\lambda\in(\sqrt 2, 2]$ for the tent map $T_{\lambda}$. 
Let $B$ be the corresponding outside map. Denote by $\widehat{S}=\underleftarrow{\lim}\{S^1, B\}$. Theorem 4.28 from \cite{BdCH} shows that there is a natural
homeomorphism $h$ between $\widehat{S}$ and the circle  $P$ of prime ends of $\underleftarrow{\lim}(I, T_{\lambda})$.
 Then $h$ conjugates the shift homeomorphism $\widehat{B}$ of the outside map to the action of $H$ on $P$, 
 so that the prime end rotation number of $\underleftarrow{\lim}(I, T_{\lambda})$ is equal to the 
 rotation number (as defined in \eqref{eq:rot}) of $\widehat{B}$, see \cite[Lemma 4.30]{BdCH}. Finally, Corollary 4.36 in 
 \cite{BdCH} gives that the prime end rotation number of $\widehat{B}$ is equal to the rotation number of $B$. 
Since $\bar{\varphi}$ and $B$ are conjugate, the results above follow analogously and by 
Proposition~\ref{prop:rotnumber} we obtain that the prime end rotation number of $\widehat{B}$ equals $1-\alpha$.

\begin{proposition}\label{prop:primeendrot}
	Let $T_\lambda$ be a tent map with slope $\lambda\in (\sqrt{2},2]$ and let $\bar{\varphi}$ be 
	corresponding stunted Lorenz map with rotation number $\alpha$. 
	Let $\underleftarrow{\lim}([0, 1], T_{\lambda})$ be embedded in $\S^2$ by a BBM construction. 
	Then the prime end rotation number of $\widehat{T}_{\lambda}$ on $\underleftarrow{\lim}([0, 1], T_{\lambda})$ 
	equals $1-\alpha$. 
\end{proposition}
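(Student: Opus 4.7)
The plan is to chain together the conjugacy observation already made in the text with the results of Boyland--de Carvalho--Hall, rather than to build anything from scratch. The proposition is essentially a bookkeeping statement that tracks rotation numbers through a sequence of identifications; the only delicate point is the orientation reversal.

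First, I would invoke the explicit conjugacy $G\colon\R/\Z\to\R/2\Z$, $G(x)=2(1-x)\bmod 2$, identified in the excerpt, which satisfies $G\circ \bar\varphi=B\circ G$. Because $G$ is an orientation-reversing homeomorphism of the circle, lifting to $\R$ changes the sign of the translation at infinity, so the rotation numbers satisfy $\rho(B)=1-\rho(\bar\varphi)=1-\alpha$ (where $\alpha$ is the rotation number of $\bar\varphi$ as computed in Proposition~\ref{prop:rotnumber}). This is the one nontrivial step in the argument, and it is the only place where orientation has to be tracked carefully: the map $B$ is, by its definition in \cite{BdCH}, orientation-preserving on $\S^1=\R/2\Z$, so the minus sign really does come from the conjugacy $G$ rather than from any reversal hidden in $B$.

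Second, I would quote the three results from \cite{BdCH} in the order they were cited in the excerpt. Theorem~4.28 gives a homeomorphism $h\colon \widehat{S}=\underleftarrow{\lim}(S^1,B)\to P$, where $P$ is the circle of prime ends of the BBM-embedded continuum $\underleftarrow{\lim}(I,T_\lambda)$. Lemma~4.30 says $h$ conjugates $\widehat{B}$ to the action of the planar homeomorphism $H$ on $P$, so in particular the prime end rotation number of $\widehat{T}_\lambda$ on $\underleftarrow{\lim}(I,T_\lambda)$ equals the rotation number of $\widehat{B}$ on $\widehat{S}$. Corollary~4.36 then identifies the rotation number of $\widehat{B}$ with the rotation number of $B$ itself.

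Assembling the chain,
\[
\text{prime end rot.~number of }\widehat{T}_\lambda \;=\; \rho(\widehat{B}) \;=\; \rho(B) \;=\; 1-\rho(\bar\varphi) \;=\; 1-\alpha,
\]
which is the claimed identity. The main (and essentially only) obstacle is the verification that the conjugacy $G$ reverses orientation and therefore flips the rotation number; once this is recorded, the proposition is an immediate translation of the \cite{BdCH} framework into the language of the stunted Lorenz map $\bar\varphi$ used throughout this paper, with Proposition~\ref{prop:rotnumber} supplying the explicit value of $\alpha$ in terms of the cutting times of $T_\lambda$.
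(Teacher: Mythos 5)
Your proposal is correct and follows essentially the same route as the paper: the orientation-reversing conjugacy $G$ gives $\rho(B)=1-\alpha$, and then Theorem~4.28, Lemma~4.30 and Corollary~4.36 of \cite{BdCH} are chained exactly as in the text preceding the proposition. The only difference is presentational — you make the orientation-reversal step slightly more explicit than the paper does.
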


\begin{remark}\label{rem:height}
	In \cite{BdCH}, the prime end rotation number is expressed in terms of the height $q(\nu)$ of the kneading sequence of a unimodal map $f$ (see the definition of height in \eg \cite[Section 2.6]{BdCH}). 
	Proposition~\ref{prop:rotnumber} thus gives an algorithm to compute the height of the kneading sequence in the following way: find the 
	smallest $n\in\N$ such that $c_n\in (1-b, b)$, and $n$ is a cutting time $n=S_k$. 
	Then the height equals $1-k/S_k$. If no such $n$ exists, then the height equals $1-\lim_{k \to \infty} k/S_k$.
	Recall that $b>c$ is such that $f^2(b)=\widehat{f^2(c)}$, so the itinerary of $b$ is 
	$\itin(b)=11\nu_3\nu_4\nu_5\ldots$, see \eqref{eq:ib}, where $\nu=\itin(f(c))=10\nu_3\nu_4\nu_5\ldots$ is the kneading sequence. 
	Hence, the previous condition can be expressed with symbols as $01\nu_3\nu_4\nu_5\ldots\prec\nu_n\nu_{n+1}\nu_{n+2}\ldots\prec 11\nu_3\nu_4\nu_5\ldots$, 
	where $\prec$ denotes the parity-lexicographic ordering on symbolic sequences.
\end{remark}

Furthermore, \cite{BdCH} gives the complete characterisation of accessible points 
the BBM embeddings of $\underleftarrow{\lim}(I, T_{\lambda})$ using the outside map.
We emphasize it here and connect it to the stunted Lorenz map $\overline{\varphi}$.

Let $S\colon[0,2]\to\R^2$ be a circle parametrisation defined by $S(t)=(\frac 12+\frac 12\cos(\pi+t\pi), \frac 12\sin(\pi+t\pi))$ for $t\in[0,2]$. 
Let $\tau\colon S([0,2])\to I$ be the vertical projection, \ie $\tau((x,y))=x$ for $(x,y)\in S([0,2])$. 
Furthermore, let $\mathring{\gamma}=(S(2/\lambda), S(2))$. As before, let $\hat S=\underleftarrow{\lim}\{S([0,2]), B\}$.

\begin{proposition}[Theorem~4.28(d), Remark~4.15, Definition~4.12, Corollary~4.14 in \cite{BdCH}]
	Let $\underleftarrow{\lim}(I, T_{\lambda})$ be embedded in $\R^2$ by an orientation-preserving BBM embedding. Then $(x_0,x_1,x_2, \ldots)\in\underleftarrow{\lim}(I, T_{\lambda})$ is accessible if and only if there exists $N\geq 0$ and $y=(y_0,y_1,y_2, \ldots)\in \hat S$ such that $y_i\not\in\mathring{\gamma}$ for all $i>N$ and such that $x_{N+j}=\tau(y_{N+j})$ for all $j\geq 0$.
\end{proposition}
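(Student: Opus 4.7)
The plan is to unpack the statement by following the thread of results in \cite{BdCH}, since the proposition essentially consolidates Theorem 4.28(d), Remark 4.15, Definition 4.12, and Corollary 4.14 of that paper. The guiding idea is to transfer the accessibility question from the planar attractor to the inverse limit $\widehat{S}$ of the outside map using the natural homeomorphism $h\colon \widehat{S}\to P$ between $\widehat{S}$ and the circle of prime ends $P$ of $\underleftarrow{\lim}(I,T_\lambda)$. In this way, accessibility becomes a symbolic condition on backward trajectories $(y_0,y_1,\ldots)$ of $B$.

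For the easier direction, starting from $y\in\widehat{S}$ with $y_i\notin\mathring{\gamma}$ for $i>N$, I would first verify that the key identity $\tau\circ B=T_\lambda\circ\tau$ holds on the complement of $\mathring{\gamma}$, so that $(\tau(y_{N+j}))_{j\ge 0}$ is a genuine backward trajectory of $T_\lambda$; the initial coordinates $x_0,\ldots,x_{N-1}$ are then recovered by iterating $T_\lambda$ on $x_N=\tau(y_N)$. Applying Corollary 4.14 of \cite{BdCH} produces an external curve approaching $h(y)\in P$ without entering the plateau fan, and this curve lands precisely at the point $(x_0,x_1,\ldots)$.

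For the converse direction, I would take an accessible point $(x_0,x_1,\ldots)$, lift the accessing arc to a prime end $p\in P$ whose principal set contains it, and set $y=h^{-1}(p)\in\widehat{S}$. The delicate point is to argue that no coordinate $y_i$ can lie in $\mathring{\gamma}$ for arbitrarily large $i$: the arc $\mathring{\gamma}$ is exactly the region on which $B$ collapses onto the constant $2-\lambda$, and in the BBM construction its preimages produce a fan of chords in the embedding. Via Definition 4.12 and Remark 4.15 of \cite{BdCH}, a representative whose trajectory enters $\mathring{\gamma}$ infinitely often forces the impression of $p$ to spread across this fan, contradicting the hypothesis that $p$ lands at a single accessible point of the attractor. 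The compatibility $x_{N+j}=\tau(y_{N+j})$ then follows from the semi-conjugacy via $\tau$.

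The main obstacle is the prime end machinery combined with the fan structure coming from the plateau: re-establishing the homeomorphism $h\colon\widehat{S}\to P$ from scratch requires the careful geometric analysis of the BBM construction carried out across Sections 3 and 4 of \cite{BdCH}, together with the identification of precisely which prime end impressions are singletons. Since that analysis is already in place, the cleanest presentation is to state the proposition as a direct translation of those results and refer the reader there for the internal details.
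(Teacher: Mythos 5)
Your proposal is consistent with the paper, which offers no proof of this proposition at all: it is stated purely as a restatement of Theorem~4.28(d), Remark~4.15, Definition~4.12 and Corollary~4.14 of \cite{BdCH}, exactly as your final paragraph concludes is the appropriate presentation. Your sketch of how the argument runs inside \cite{BdCH} (transferring accessibility to the prime-end circle via $h$ and controlling visits to $\mathring{\gamma}$) is a reasonable gloss but is not something the paper attempts to reprove.
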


Using the conjugacy of $B$ and $\bar{\varphi}$, we can state the previous theorem in terms of $\bar{\varphi}$ directly. We parametrise the circle above as $T\colon I\to\R^2$ as $T(t)=(\frac 12+\frac 12\cos(\pi+2t\pi), \frac 12\sin(\pi+2t\pi))$ 
and let $\mathring{\delta}=(T(0), T(\frac{\lambda-1}{\lambda}))$. The vertical projection onto a horizontal diameter is denoted by $\tau$ as above (that is actually $\tau\circ G$, where $G$ is the conjugacy between $\bar{\varphi}$ and $B$ and it is equal to the vertical projection). 
In particular, $\tau\circ\bar{\varphi}(x)=T_{\lambda}\circ\tau(x)$ for all $x\not\in\mathring{\delta}$.

\begin{theorem}\label{thm:accessible}
	Let $T_\lambda$ be a tent map with slope $\lambda\in (\sqrt{2},2]$ and let $\bar{\varphi}$ be 
	corresponding stunted Lorenz map with rotation number $\alpha$. 
	Let $\underleftarrow{\lim}(I, T_{\lambda})$ be embedded in $\S^2$ by an orientation-preserving BBM embedding. 
	Let $\hat S=\underleftarrow{\lim}(S^1,\bar{\varphi})$.
	A point $(x_0,x_1,x_2,\ldots)\in \underleftarrow{\lim}(I, T_{\lambda})$ is accessible if and only if there 
	exists $N\geq 0$ and $y=(y_0,y_1,y_2, \ldots)\in \hat S$ such that $y_i\not\in\mathring{\delta}$ for all $i>N$ 
	and such that $x_{N+j}=\tau(y_{N+j})$ for all $j\geq 0$.
\end{theorem}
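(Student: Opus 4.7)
The plan is to derive Theorem~\ref{thm:accessible} directly from the preceding Boyland--de Carvalho--Hall proposition by transporting that statement through the explicit conjugacy between the outside map $B$ and the stunted Lorenz map $\bar\varphi$. Since all the substantive work---characterising accessibility in terms of a backward-asymptotic orbit on the outside circle that avoids the preimage arc of the constant branch---is already carried out in \cite{BdCH}, what remains is essentially a change of coordinates.

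First, I would promote $G\colon \R/\Z \to \R/2\Z$, $G(x)=2(1-x)\bmod 2$, to the inverse limits: applying $G$ coordinate-wise yields a homeomorphism
$$
\hat G\colon \underleftarrow\lim(S^1,\bar\varphi) \ \longrightarrow\ \underleftarrow\lim(S([0,2]),B)
$$
that conjugates $\widehat{\bar\varphi}$ to $\widehat B$. Next I would check the two compatibility conditions. The arcs match: since $G$ acts on the circle parametrisations by $t\mapsto 2(1-t)$, one has $T(0)=S(2)$ and $T((\lambda-1)/\lambda)=S(2/\lambda)$, so $G$ sends $\mathring\delta$ onto $\mathring\gamma$ (with reversed orientation, but this is immaterial for the set-theoretic condition $y_i\notin\mathring\delta$). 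The projections match: by the discussion preceding the statement of the theorem, the $\tau$ appearing in Theorem~\ref{thm:accessible} equals $\tau_{\mathrm{BdCH}}\circ G$, where $\tau_{\mathrm{BdCH}}$ denotes the vertical projection used in the BdCH proposition.

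Given these two ingredients, the theorem follows mechanically. If $(x_0,x_1,\ldots)$ is accessible, the BdCH proposition supplies $N\geq 0$ and $z=(z_0,z_1,\ldots)\in\underleftarrow\lim(S([0,2]),B)$ with $z_i\notin\mathring\gamma$ for $i>N$ and $x_{N+j}=\tau_{\mathrm{BdCH}}(z_{N+j})$. Setting $y=\hat G^{-1}(z)$ converts these into $y_i\notin\mathring\delta$ for $i>N$ (by arc compatibility) and $x_{N+j}=\tau_{\mathrm{BdCH}}(G(y_{N+j}))=\tau(y_{N+j})$ (by projection compatibility). The converse is symmetric using $\hat G$ in place of $\hat G^{-1}$. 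The only genuine obstacle is the bookkeeping around the orientation-reversing character of $G$---the same feature that forces the prime end rotation number to be $1-\alpha$ rather than $\alpha$ in Proposition~\ref{prop:primeendrot}---and no new dynamical input is required beyond the conjugacy established in Section~\ref{sec:outer}.
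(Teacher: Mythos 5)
Your proposal is correct and follows exactly the route the paper takes: the paper presents Theorem~\ref{thm:accessible} as a direct restatement of the quoted BdCH proposition via the coordinate-wise extension of the conjugacy $G$, with the arc $\mathring\delta$ and the projection $\tau$ defined precisely so that $G(\mathring\delta)=\mathring\gamma$ and $\tau=\tau_{\mathrm{BdCH}}\circ G$. Your verification of these two compatibility conditions is the same (implicit) argument the paper relies on, so there is nothing to add.
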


We say that a point $x=(x_0,x_1, \ldots)\in\underleftarrow{\lim}(I, T_{\lambda})$ is a {\em folding point} if $x_n\in\omega_{T_{\lambda}}(c)$ for every $n\geq 0$. 
In the context of inverse limits on intervals this is equivalent to saying that $x$ has no neighbourhood homeomorphic to the Cantor set times 
an open interval (see \cite{Raines}). 
In the case when rotation number of $\bar \varphi$ is irrational, Proposition~\ref{prop:rotnumber} and its proof imply that $f\colon\omega_{\varphi}(c)\to\omega_{T_{\lambda}}(c)$ is a homeomorphism (recall that orbits of $c$ 
under $\varphi$ and $\bar{\varphi}$ are the same when the corresponding height of the tent map is irrational). 
From that and Theorem~\ref{thm:accessible} we have the following:

\begin{corollary}\label{cor:fp}
	If $\lambda\in(\sqrt 2, 2]$ and the rotation number of $\bar{\varphi}$ is irrational (\ie the height of the kneading sequence of $T_{\lambda}$ is irrational), then every 
	folding point of $\underleftarrow{\lim}(I, T_{\lambda})$ embedded in $\R^2$ by the orientation-preserving BBM embedding is accessible.
\end{corollary}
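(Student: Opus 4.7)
Given a folding point $x = (x_0, x_1, \ldots) \in \underleftarrow{\lim}(I, T_\lambda)$, so $x_n \in \omega_{T_\lambda}(c)$ for every $n \geq 0$, the plan is to exhibit a point $y = (y_0, y_1, \ldots) \in \hat S$ with $y_n \notin \mathring\delta$ for every $n$ and $\tau(y_n) = x_n$. Theorem~\ref{thm:accessible} applied with $N = 0$ will then yield accessibility of $x$ for free.

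The two ingredients I plan to use from Proposition~\ref{prop:rotnumber}, valid under the irrationality assumption on the rotation number, are: (i) $\omega_{\bar\varphi}(c) = \omega_\varphi(c)$ is disjoint from the plateau $[0,a]$, whose image under the circle parametrization is precisely $\mathring\delta$; and (ii) $f = T_\lambda \colon \omega_\varphi(c) \to \omega_{T_\lambda}(c)$ is a homeomorphism. The first task is to upgrade (ii) to the statement that $\tau|_{\omega_{\bar\varphi}(c)} \colon \omega_{\bar\varphi}(c) \to \omega_{T_\lambda}(c)$ is a homeomorphism. Continuity is clear; injectivity will follow because $\tau$ identifies each point with its circle involution $\tilde y$, so any collapse $\tau(y)=\tau(y')$ with $y\neq y'$ inside $\omega_{\bar\varphi}(c)$ would place a pair $\{y,\tilde y\}$ with $y\neq \tilde y$ inside $\omega_\varphi(c)$, contradicting the injectivity half of (ii); surjectivity will follow from the semi-conjugacy $\tau \circ \bar\varphi = T_\lambda \circ \tau$ (which holds off $\mathring\delta$ and hence on $\omega_{\bar\varphi}(c)$ by (i)) together with invariance of $\omega$-limit sets; compactness will then upgrade the continuous bijection to a homeomorphism.

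With this in hand, set $y_n := (\tau|_{\omega_{\bar\varphi}(c)})^{-1}(x_n)$. Property (i) gives $y_n \notin \mathring\delta$ automatically. To verify $y \in \hat S$, I observe that $y_{n+1} \notin \mathring\delta$, so the semi-conjugacy yields
$$
\tau(\bar\varphi(y_{n+1})) = T_\lambda(\tau(y_{n+1})) = T_\lambda(x_{n+1}) = x_n = \tau(y_n),
$$
and since $\bar\varphi(y_{n+1}) \in \omega_{\bar\varphi}(c)$ and $\tau$ is injective there, this forces $\bar\varphi(y_{n+1}) = y_n$. Theorem~\ref{thm:accessible} (with $N = 0$) then finishes the proof.

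The main technical obstacle is the upgrade from the homeomorphism $f\colon \omega_\varphi(c) \to \omega_{T_\lambda}(c)$ in Proposition~\ref{prop:rotnumber} to the corresponding property of $\tau$; this is where the fact that $\tau$ and $f$ have the same fiber structure on symmetric sets (both being invariant under the involution $y \mapsto \tilde y$) is essential. Once that identification is in place, the construction of the lift $y$ and the verification that it lies in $\hat S$ amount to a direct unwinding of the semi-conjugacy and the definitions from Theorem~\ref{thm:accessible}.
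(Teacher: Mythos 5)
Your proposal is correct and follows essentially the same route as the paper's proof: both establish that $\tau$ restricts to a bijection from $\omega_{\bar\varphi}(c)$ onto $\omega_{T_\lambda}(c)$ (surjectivity via the semi-conjugacy, injectivity via the involution $y\mapsto\tilde y$ together with the homeomorphism statement of Proposition~\ref{prop:rotnumber}), then lift the folding point coordinatewise to a point of $\underleftarrow{\lim}(S^1,\bar\varphi)$ and invoke Theorem~\ref{thm:accessible}. Your explicit remark that $\omega_{\bar\varphi}(c)$ avoids the plateau, hence $y_n\notin\mathring\delta$, is a detail the paper leaves implicit but is a welcome clarification rather than a divergence.
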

\begin{proof}
	We first note that $\tau\colon\omega_{\bar\varphi}(c)\to\omega_{T_\lambda}(c)$ is well defined and bijective. 
	The first part follows since $\tau(\lim_{i\to\infty}\bar{\phi}^{n_i}(c))=\lim_{i\to\infty}\tau\circ\bar\phi^{n_i}(c)=\lim_{i\to\infty}T_{\lambda}^{n_i}\circ\tau(c)=\lim_{i\to\infty}T_{\lambda}^{n_i}(1)\in\omega_{T_\lambda}(c)$, when $\lim_{i\to\infty}\bar\phi^{n_i}(c)$ exists. Similar argument also shows that $\tau$ is surjective. For the proof of injectivity, it is enough to note that $\tau(x)=\tau(y)$ implies that $y=\tilde x$ or $y=x$ and apply the fact that $T_{\lambda}\colon\omega_{\bar\varphi}(c)\to\omega_{T_\lambda}(c)$ is a homeomorphism.
	
	Now let $(x_0, x_1, x_2, \ldots)\in\underleftarrow{\lim}(I, T_{\lambda})$ be such that $x_i\in\omega_{T_{\lambda}(c)}$ 
	for every $i\geq 0$. Then $\bar\phi(\tau^{-1}(x_i))=\tau^{-1}(T_{\lambda}(x_i))=\tau^{-1}(x_{i-1})$ for every $i>0$,
	so $(\tau^{-1}(x_0), \tau^{-1}(x_1), \tau^{-1}(x_2), \ldots)\in\underleftarrow{\lim}(S^1,\bar{\varphi})$. We apply Theorem~\ref{thm:accessible} to conclude that $(x_0, x_1, x_2, \ldots)$ is accessible.
\end{proof}

\begin{remark}	
	Given a continuum $X$, we say that point $x\in X$ is an endpoint if for every subcontinua $A, B\subset X$ such that $x\in A\cap B$, 
	we have $A\subset B$ or $B\subset A$. 
	In \cite{AC} it was shown that if $\underleftarrow{\lim}(I, T_{\lambda})$ is embedded in the plane by an orientation-preserving BBM embedding, and if $\bar{\varphi}$ has irrational rotation number (\ie the height of the kneading sequence of $T_{\lambda}$ is irrational), then all endpoints are accessible.
	Moreover, it was shown that there also exist countably many accessible non-end folding points. 
	Corollary~\ref{cor:fp}\footnote{Its statement was suggested to us by Boyland, de Carvalho, Hall through personal communication.} 
	in particular implies that
	there are uncountably many endpoints and only countably many non-end folding point 
	in $\underleftarrow{\lim}(I, T_{\lambda})$. This partially answers Problem~1 in \cite{AABC}.
\end{remark}

Let us discuss the irrational rotation number case in more details. If $1-\alpha$ (and hence $\alpha$) is irrational, then Proposition~\ref{prop:rotnumber} 
gives that $Q(j) \leq 1$ for all $j$,
and $T_{\lambda}:\omega(c) \to \omega(c)$ is a Cantor minimal homeomorphism conjugate to a Sturmian shift. 
This implies that $\widehat{H}$ induces Denjoy-like dynamics on the corresponding circle of prime ends $P$. 
In \cite{AC}, a detailed characterisation of accessible points for BBM embeddings of tent inverse limit spaces is given (there, 
also accessible endpoints and non-end folding points are distinguished), based solely on symbolic dynamics techniques 
from kneading theory.
It follows (see \cite[Theorem 11.20]{AC}) that there is a Cantor set $C\subset P$ corresponding to accessible folding points
in $\underleftarrow{\lim}(I, T_{\lambda})$ uncountably many of which are endpoints and countably many are non-end folding points. 
Furthermore, all endpoints are accessible. 
The remaining countably infinitely many open arcs 
$P\setminus C$ correspond to countably infinitely many open arcs in different \emph{arc-components} 
of $\underleftarrow{\lim}(I, T_{\lambda})$ (unions of all arcs containing some point from 
$\underleftarrow{\lim}(I, T_{\lambda})$)  that are accessible at more than one point.
Thus this planar continua are interesting also from a topological perspective.
A theorem of Mazurkiewicz~\cite{Maz} 
shows that for every indecomposable planar continuum there are at most countably infinitely many arc-components accessible at more than one point. Our examples confirm that it is possible to find planar continua indeed having countably 
infinitely many arc-components accessible at more than one point. Furthermore,  $T_{\lambda}$ is long-branched
since $\sup_j Q(j) = 1$. Therefore, all proper subcontinua of 
$\underleftarrow{\lim}(I, T_{\lambda})$ are arcs (see \eg \cite[Proposition 3]{BB99}).\\
Thus, from the discussion in this section we have a complete understanding of topology of $\underleftarrow{\lim}(I, T_{\lambda})$ 
as well as their orientation-preserving BBM planar embedding in the case when the rotation number of $\bar{\varphi}$ is irrational (that is, the height of the kneading sequence of $T_{\lambda}$ is irrational).

\end{document}